\definecolor{DeepBlue}{rgb}{0,0,.7}
\newcommand{\HH}{\mathbb{H}}
\newcommand{\R}{\mathbb{R}}
\newcommand{\Ob}{\mathcal{O}}
\newcommand{\C}{\mathbb{C}}
\newcommand{\w}[1]{\widetilde{#1}}
\newcommand{\id}{\mathrm{id}}
\newcommand{\lN}[1]{\lambda_{\eta(#1)}}
\newcommand{\PK}{P_{\sigma_{\tau(m),k}}(t)}
\newcommand{\PG}{P_{\sigma_{\tau(m),k}}^\gamma(t)}
\newcommand{\bs}{\backslash}
\newcommand{\Spin}{\text{\textnormal{Spin}}}
\newcommand{\End}{\text{End}}
\newcommand{\tr}{\text{\textnormal{tr}}\thinspace}
\newcommand{\diag}{\text{\textnormal{diag}}\thinspace}
\newcommand{\Tr}{\text{\textnormal{Tr}}\thinspace}
\newcommand{\dom}{\text{\textnormal{dom}}\thinspace}
\newcommand{\spec}{\text{spec}}
\newcommand{\vol}{\text{\textnormal{vol}}}
\newcommand{\ad}{\text{\textnormal{ad}}}
\newcommand{\Ad}{\text{Ad}}
\newcommand{\RRe}{\text{Re}}
\newcommand{\N}{\mathbb{N}}
\newlength{\dtildeheight}
\newtheorem{Th}{Theorem}[section]
\newtheorem{Lemma}[Th]{Lemma}
\newtheorem{Proposition}[Th]{Proposition}
\newtheorem{Corollary}[Th]{Corollary}
\theoremstyle{definition}
\newtheorem{lem}{Lemma}[section]
\newtheorem{remark}[lem]{Remark}
\newtheorem{definition}{Definition}[section]
\newtheorem{exmp}{Example}[section]
\begin{document}

\title{On the asymptotics of the analytic torsion for compact hyperbolic orbifolds}
\author{Ksenia Fedosova}
\address{Max Planck Institute for Mathematics\\ Vivatgasse 7\\ 53111 Bonn \\ Germany}

\email{fedosova@math.uni-bonn.de}


\begin{abstract}
We study the analytic torsion of odd-dimensional hyperbolic  orbifolds~$\Gamma \bs \HH^{2n+1}$, depending on a representation of $\Gamma$. Our main goal is to understand the asymptotic behavior of the analytic torsion with respect to sequences of representations associated to rays of highest weights.
\end{abstract}

\maketitle

\setcounter{tocdepth}{1}

\section{Introduction}
The Cheeger-M\"uller theorem asserts the equality of the Reidemeister torsion and the analytic torsion for a smooth odd-dimensional manifold. The Reidemeister torsion is a purely combinatorial invariant, whereas the analytic torsion is defined in terms of the spectrum of Laplace operators.  Because these invariants have very different natures, their equality has applications in topology, number theory, and mathematical physics. One such application is to study the growth of torsion in the cohomology of an arithmetic group $\Gamma$. This was carried out in \cite{MM,MP111} when $\Gamma$ is cocompact and without elliptic elements.

One of the steps was to prove the exponential growth of the analytic torsion under certain rays of representations \cite{Mu2,MP2}. 
Our goal is to extend the results in \cite{MP2,Mu2} to a cocompact group $\Gamma \subset \Spin(1,2n+1)$ that may have elliptic elements, because prohibiting such elements excludes a wide range of arithmetic groups. Assuming $\Gamma \subset \Spin(1,2n+1)$, so that $\Gamma \bs \Spin(1,2n+1) / \Spin(2n+1)$ is a compact odd-dimensional hyperbolic orbifold, we define the analytic torsion of such orbifold through the zeta function of the Laplace-de Rham operator in a similar way as it is done for manifolds; see Definition \ref{senbernary}. Our main result is the following theorem.
\begin{Th}\label{adacupcakeprincess}
Let $\Ob = \Gamma \bs \HH^{2n+1}$ be a compact hyperbolic  orbifold. For $m \in \N$, let $\tau(m)$ be the finite-dimensional irreducible representation of $\Spin(1,2n+1)$ from Definition \ref{mimirep}
and $\tau'(m)$ be the restriction of $\tau(m)$ to $\Gamma$.
Let $E_{\tau(m)}\to \Ob$ be the associated flat vector orbibundle, and denote by $T_\Ob (\tau(m))$
 its analytic torsion. Then there exists $C > 0$ such that  
$$\log T_\Ob (\tau(m)) =  PI(m) + PE(m) + O(e^{-C  m}), \quad m \to \infty.$$
Above, $PI(m)$ is a polynomial in $m$ of degree $\frac{n^2+n+2}{2}$ and $PE(m)$ is a pseudo-polynomial in $m$
of degree $\frac{d^2+d+2}{2}$ with $d$ being the 
maximal dimension of a singular stratum of $\Ob$. The pseudopolynomial $PE(m)$ is a sum of type
$\sum_{j=0}^{\frac{d^2+d+2}{2}} \sum_{k=0}^K C_j m^j e^{i m \phi_{j,k}},$
where $C_j, \phi_{j,k} \in \R$, $K \in \N$ are  constants depending on the elliptic elements of $\Gamma$.
\end{Th}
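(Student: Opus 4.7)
My plan is to compute $\log T_\Ob(\tau(m))$ through the Selberg trace formula for the compact orbifold $\Ob = \Gamma \bs \HH^{2n+1}$, applied to the heat kernel of the twisted Hodge Laplacian $\Delta_p(\tau(m))$ on $E_{\tau(m)}$-valued $p$-forms. For each $p$ the trace formula decomposes
\[
\tr\left(e^{-t \Delta_p(\tau(m))}\right) = I_p(t,m) + H_p(t,m) + E_p(t,m),
\]
into an identity/volume term $I_p$, a loxodromic (semisimple, non-elliptic, non-trivial) contribution $H_p$, and an elliptic contribution $E_p$; the last term is absent in the smooth cocompact setting of \cite{Mu2, MP2}. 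Taking the Mellin transform in $t$, differentiating the spectral zeta function at $s = 0$, and forming the standard alternating sum $\tfrac{1}{2} \sum_p (-1)^p p\, \partial_s \zeta_p|_{s = 0}$ splits
\[
\log T_\Ob(\tau(m)) = I(m) + H(m) + E(m),
\]
and the theorem reduces to proving $I(m) = PI(m)$, $H(m) = O(e^{-Cm})$, and $E(m) = PE(m)$.

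\textbf{Identity and hyperbolic parts.} The identity term $I(m)$ equals $\vol(\Ob)$ times exactly the local quantity appearing in the manifold case, since the centralizer of $\{e\}$ is the full group $G = \Spin(1, 2n+1)$. Applying the Harish-Chandra Plancherel formula for the spherical principal series of $G$ and evaluating the resulting Mellin integral along the highest-weight ray $\tau(m)$ then reproduces verbatim the calculation of Müller and Pfaff \cite{Mu2, MP2} and yields the polynomial $PI(m)$ of degree $\frac{n^2+n+2}{2}$. For $H(m)$ I would repackage the loxodromic orbital sum into the Ruelle/Selberg zeta function $R(s; \tau(m))$; extending Fried's identity to the orbifold (by grouping the elliptic correction into $E(m)$) expresses $H(m)$ as $\log R$ at a point $s_0(m)$ whose real part grows linearly in $m$. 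Since the zeros and poles of $R(\,\cdot\,; \tau(m))$ remain in a strip of fixed width as $m$ varies, the Euler product estimate yields $H(m) = O(e^{-Cm})$ for some $C > 0$.

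\textbf{Elliptic contribution --- the main obstacle.} The decisive new step is the analysis of $E(m)$. For each non-trivial elliptic $\Gamma$-conjugacy class $[\gamma]$ the centralizer $G_\gamma \subset G$ is a real reductive subgroup whose symmetric space $X_\gamma$ is the totally geodesic fixed-point locus of $\gamma$ in $\HH^{2n+1}$, itself a lower-dimensional hyperbolic space whose rank corresponds to the singular-stratum index. After the Mellin transform and $s$-derivative, each orbital integral reduces to a Plancherel-type integral over the spherical principal series of $G_\gamma$, integrated against the character $\chi_{\tau(m)}$ evaluated at elements of the form $\gamma \cdot \exp H$ with $H$ in a Cartan of $G_\gamma$. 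Applying the Weyl character formula of $G$ rewrites
\[
\chi_{\tau(m)}(\gamma \cdot \exp H) = \sum_{w \in W} \epsilon(w)\, e^{i m\, \phi_w(\gamma)}\, g_w(H, m),
\]
where $\phi_w(\gamma)$ depends only on the rotation angles of $\gamma$ on the Cartan and $g_w(H, m)$ is explicit in terms of the highest weight and the Weyl denominator. Performing the $H$-integration against the Plancherel density of $G_\gamma$ then produces, for each Weyl element $w$ and each class $[\gamma]$, an oscillating factor $e^{i m \phi_w(\gamma)}$ times a polynomial in $m$; by exactly the same mechanism that yields $PI(m)$, the maximal stratum contributes polynomials of degree $\frac{d^2+d+2}{2}$, and summing over the finite set of elliptic classes assembles the full pseudo-polynomial $PE(m)$ with the claimed structure.

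\textbf{Principal difficulty.} The core technical work is to justify these manipulations uniformly in $m$ along the highest-weight ray. Concretely, one must separate the spherical principal-series contribution on each $G_\gamma$ (the sole source of the polynomial-in-$m$ factors) from the remaining spectral pieces (discrete series, complementary series, and residual spectrum) and verify that all of the latter, together with the error terms arising from the Mellin and contour analyses, are $O(e^{-Cm})$. This requires sharp uniform estimates on the Plancherel density of each centralizer and a carefully tailored contour shift, and is the main obstacle; once these estimates are in place, the decomposition above assembles the theorem.
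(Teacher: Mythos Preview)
Your overall architecture---Selberg trace formula, identity $\Rightarrow PI(m)$, hyperbolic $\Rightarrow O(e^{-Cm})$, elliptic $\Rightarrow PE(m)$---matches the paper. But two substantive ingredients are missing or handled differently, and the second is a genuine gap.

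\medskip
\textbf{1. Metric rescaling.} The paper does not work directly with the Mellin transform of $K(t,\tau(m))$. It first proves a metric-invariance theorem for the analytic torsion of odd-dimensional orbifolds with odd-dimensional singular strata (Theorem~\ref{cuppy}/Corollary~\ref{cor:metricind}), and then rescales $g\mapsto \sqrt{m}\,g$, i.e.\ replaces $\Delta_p(\tau(m))$ by $\tfrac{1}{m}\Delta_p(\tau(m))$. This is what makes the split $\int_0^1+\int_1^\infty$ work uniformly in $m$: after rescaling, the tail $\int_1^\infty t^{-1}K(t/m,\tau(m))\,dt$ is $O(e^{-m/8})$ by a direct heat-kernel bound, and the hyperbolic piece on $[0,1]$ is $O(e^{-cm})$ by the estimates already in \cite{MP111,MP2}. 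You do not mention this step; without it, obtaining uniform-in-$m$ control of the Mellin transform and its $s$-derivative at $0$ is not automatic, and your Ruelle-zeta route for $H(m)$ would in addition require an orbifold Fried identity that is not available off the shelf.

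\medskip
\textbf{2. The elliptic term.} Here your approach diverges from the paper and runs into the very obstacle you flag as the ``principal difficulty''. You propose to analyse each elliptic orbital integral via the Plancherel theory of the centralizer $G_\gamma$, and then worry about discrete/complementary/residual spectrum on $G_\gamma$. The paper avoids all of this. It invokes \cite{Fe1}, which expresses the elliptic orbital integral \emph{in terms of the unitary principal series of the full group $G$}:
\[
E(t,\tau)=\sum_{\{\gamma\}\ \text{elliptic}}\vol(\Gamma_\gamma\backslash G_\gamma)\sum_{\sigma\in\widehat M}\int_{\R}P^\gamma_\sigma(i\lambda)\,\Theta_{\sigma,\lambda}(k_t^\tau)\,d\lambda,
\]
with explicit even polynomials $P^\gamma_\sigma$. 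This is the crucial input: it puts $E(t,\tau)$ in exactly the same shape as $I(t,\tau)$, so the same character computation (Lemma~\ref{exitlight}) reduces everything to finitely many Gaussian integrals $\int_\R e^{-t\lambda^2}P^\gamma_{\sigma_{\tau(m),k}}(i\lambda)\,d\lambda$. No spectral theory on $G_\gamma$ enters, and there is no discrete-series issue to control.

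The technical heart is then a set of algebraic identities for the $P^\gamma_{\sigma_{\tau(m),k}}$ (Lemmas~\ref{lemma:asymp1}--\ref{lemma:asymp4}): the alternating sum $\sum_k(-1)^kP^\gamma_{\sigma_{\tau(m),k}}(\nu)$ is \emph{independent of $\nu$} and is a pseudopolynomial in $m$ of degree $\le d(d-1)/2$, together with the right coefficient bounds. These are proved by grouping Weyl-group summands and a Lagrange-interpolation argument (a degree-$2(d-1)$ polynomial taking the same value at $2d$ points is constant). The pseudopolynomial structure with phases $e^{im\phi}$ comes from the characters $\zeta_{-s(\Lambda+\rho_M)}(\gamma)$ inside $P^\gamma_\sigma$, not from a Weyl character formula applied to $\chi_{\tau(m)}(\gamma\cdot\exp H)$ as you suggest. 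Your sketch does not indicate any mechanism that would produce the key cancellation of Lemma~\ref{lemma:asymp1}, and without it the degree count $\tfrac{d^2+d+2}{2}$ for $PE(m)$ does not follow.
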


\begin{remark}
The term $PE(m)$ does not appear when $\Ob$ is a manifold, compare \cite{MP2}.
\end{remark}

The proof of Theorem \ref{adacupcakeprincess} is based on \cite{MP2} but requires ingredients that we soon specify. Let $\tau(m)$ and $E_{\tau(m)} \to \Ob$ be as above; the vector orbibundle $E_{\tau(m)} \to \Ob$ can be equipped with a canonical Hermitian fibre metric \cite[Proposition 3.1]{MaMu}. Let $\Delta_p(\tau(m))$ be the Laplace operator on $E_{\tau(m)}$-valued $p$-forms with respect to the metric on $E_{\tau(m)}$ and the hyperbolic metric on~$\Ob$. Denote 
$$ K(t, \tau(m)):=\sum_{p=0}^{2n+1} (-1)^p p\, \Tr \left( e^{-t \Delta_p(\tau(m))} \right),$$
then the analytic torsion is given by
\begin{equation}\label{lordofmercy}
\log T_{\Ob}(\tau(m))=\frac{1}{2} \frac{d}{ds} \left.\left(   \frac{1}{\Gamma(s)} \int_0^\infty t^{s-1} K(t,\tau) dt \right)\right|_{s=0}.
\end{equation}
For further estimates we need the following theorem:
\begin{Th}\label{cuppy}
Let $(\Ob, g)$ be a compact Riemannian orbifold, not necessarily hyperbolic, $E \to \Ob$ an associated flat orbibundle and $\Delta_k$ the Laplacian acting on $E$-valued $k$-form. Assume that $\Ob$ and its singular strata are odd dimensional and that the cohomology $H^j(\Ob;E)$ vanishes. Then the analytic torsion $T_\Ob(h,g)$ does not depend on a smooth deformation of the metric $g$.
\end{Th}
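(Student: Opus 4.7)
The plan is to follow the Ray-Singer variational argument, adapted to orbifolds via a decomposition of the heat trace into contributions indexed by singular strata (as in Kawasaki's orbifold index theorem or Donnelly's fixed-point heat-kernel computations). Let $g_u$ be a smooth one-parameter family of Riemannian metrics on $\Ob$ with associated Laplacians $\Delta_k^u$ and Hodge stars $\ast_u$, and set $\alpha_u := \ast_u^{-1}\dot\ast_u$. Differentiating \eqref{lordofmercy} in $u$, combined with the standard commutator identity for $\dot\Delta_k^u$ and the hypothesis $H^j(\Ob; E) = 0$ (which eliminates all harmonic-projector corrections), reduces the problem to showing
$$\frac{d}{du}\log T_\Ob(h, g_u) = \frac{1}{2}\,\mathrm{FP}_{s=0}\,\frac{d}{ds}\!\left[\frac{1}{\Gamma(s)}\int_0^\infty t^{s-1}\sum_{k}(-1)^k k\,\Tr\!\bigl(\alpha_u\, e^{-t\Delta_k^u}\bigr)\,dt\right] = 0.$$

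I would then invoke the orbifold heat-trace decomposition
$$\Tr\!\bigl(\alpha_u e^{-t\Delta_k^u}\bigr) = \int_{\Ob^\circ}\mathrm{tr}_x\!\bigl(\alpha_u K_k^u(t,x,x)\bigr)\,dx + \sum_S \int_S \rho_{k,S}^u(t,x)\,dx,$$
where $\Ob^\circ$ is the regular part, $S$ ranges over the singular strata, and $\rho_{k,S}^u$ is built from the heat kernel of the local uniformizing cover restricted to $S$ and weighted by the isotropy element whose fixed set is $S$. For each such piece the pointwise integrand admits a small-$t$ asymptotic of the form $t^{-d/2}\sum_{j\ge 0} c_j(x)t^j$ with $d = \dim \Ob^\circ$ or $d = \dim S$. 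Since by hypothesis all these dimensions are odd, no integer power of $t$ appears; consequently the Mellin transform of each stratum's contribution is holomorphic at $s = 0$, and computing the required finite part reduces to the pointwise value of the alternating sum at $s = 0$ on each stratum.

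The last step is Ray-Singer's cancellation: on an oriented odd-dimensional space the Hodge $\ast$ implements the identification $\Delta_k \cong \Delta_{d-k}$ and relates the action of $\alpha_u$ on forms of complementary degree, producing enough symmetry for the $\sum_k(-1)^k k$ pattern to annihilate the relevant finite part. For $\Ob^\circ$ this is the original Ray-Singer argument. The main obstacle is transporting the same cancellation to the singular strata: the integrand $\rho_{k,S}^u$ is not the heat-kernel density of an operator on $S$ alone but carries a twist from the local isotropy action on the normal bundle $N_S$. After splitting forms as $\Lambda^\bullet T^*S \otimes \Lambda^\bullet N_S^*$, one has to verify that the tangential Hodge star on the odd-dimensional submanifold $S$ still implements the required duality despite the normal twisting — the key point being that the isotropy element acts as an isometry on $N_S$ whose action on $\Lambda^\bullet N_S^*$ commutes with $\ast_S$ and preserves the parity of normal form-degree, so that the tangential Ray-Singer parity/duality argument survives intact. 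This is where I expect the bulk of the technical work.
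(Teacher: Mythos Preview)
Your overall strategy --- Ray--Singer variation with $\alpha_u=\ast_u^{-1}\dot\ast_u$, followed by a stratum-by-stratum heat-trace asymptotic --- is the same as the paper's.  But two points deserve correction.

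First, the displayed formula is not the outcome of the Ray--Singer manipulation.  The ``commutator identity'' is
\[
\sum_k(-1)^k\,k\,\Tr\!\bigl(\dot\Delta_k e^{-t\Delta_k}\bigr)=\frac{\partial}{\partial t}\sum_k(-1)^k\,\Tr\!\bigl(\alpha_u e^{-t\Delta_k}\bigr),
\]
and after inserting this and integrating by parts in $t$ the factor $k$ \emph{disappears} and an extra factor of $s$ appears.  The upshot (exactly as in the paper) is
\[
\frac{\partial}{\partial u}\log T_\Ob(h,g_u)=-\tfrac12\sum_k(-1)^k\,l_k(u),
\]
where $l_k(u)$ is the coefficient of $t^0$ in the small-$t$ expansion of $\Tr(\alpha_u e^{-t\Delta_k})$.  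With your formula (still carrying the $k$ and no extra $s$) you are indeed left needing the \emph{value} of the Mellin transform at $s=0$, which is why you reach for a duality cancellation; with the correct formula you only need the \emph{residue}, i.e.\ the constant term $l_k(u)$.

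Second --- and this is the main point --- the entire ``last step'' with Hodge duality on singular strata is unnecessary.  You have already observed that the orbifold heat expansion of $\Tr(\alpha_u e^{-t\Delta_k})$ is a sum over strata of series in $t^{-\dim S/2+j}$, $j\ge 0$.  Since $\dim\Ob$ and every $\dim S$ are odd by hypothesis, no integer power of $t$ occurs; in particular there is no $t^0$ term, so $l_k(u)=0$ for each $k$ \emph{individually}.  That finishes the proof, and is precisely what the paper does.  The normal-bundle/tangential-$\ast$ bookkeeping you outline would be genuinely delicate, and the odd-dimensionality hypothesis on the strata is there exactly so that one never has to carry it out.
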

Theorem \ref{cuppy} is of interest on its own: it is an important problem to understand the relation between the analytic and the Reidemeister torsions for orbifolds. If they were equal, Theorem~\ref{adacupcakeprincess} would imply the exponential growth of torsion in the cohomology of cocompact arithmetic groups \cite{MP111, MM}.
In turn, any reasonable relation between the analytic and the Reidemeister torsions can be expected to imply that the former does not depend on the variation of the metric; this is shown in Theorem \ref{cuppy} under certain restrictions. 

We will now describe a rough plan of the proof of Theorem~\ref{adacupcakeprincess}. As a simple corollary of Theorem~\ref{cuppy}, we can scale the metric $g$ on $\Ob$ and hence replace $\Delta_p(\tau(m))$ by $\frac{1}{m} \Delta_p(\tau(m))$ in (\ref{lordofmercy}). Splitting the integral into the integral over $[0,1)$ and the integral over $[1,\infty)$, we obtain
$$ \log T_{\Ob}(\tau(m))=
  \left. \frac{1}{2} \frac{d}{ds} \left( \frac{1}{\Gamma(s)} \int_0^1 t^{s-1} K\left(\frac{t}{m}, \tau(m)\right) d t  \right) \right|_{s=0} + \frac{1}{2}  \int_1^\infty t^{s-1} K(t,\tau) dt.$$
It follows from \cite{MP2} that the second term is $O(e^{-m/8})$ as $m \to \infty$; to estimate the first term we use the Selberg trace formula. For this we construct a smooth $K$-finite function $k_t^\tau(m)$ on $\Spin(1,2n+1)$ as in Subsection \ref{senbernary} such that
$$ K(t,\tau(m)) = \int_{\Gamma \bs \Spin(1,2n+1)} \sum_{\gamma \in \Gamma} k_t^{\tau(m)} (g^{-1} \gamma g) d \dot{g}.$$
By the Selberg trace formula for compact orbifolds, 
$$K(t,\tau(m)) = I(t, \tau(m)) + H(t, \tau(m)) +E(t, \tau(m)),$$
where $I(t,\tau(m))$, $H(t,\tau(m))$ and $E(t,\tau(m))$ are the contributions from the identity, hyperbolic and elliptic elements of $\Gamma$, respectively. Note $E(t,\tau(m))$ vanishes if $\Ob$ is a smooth manifold. It follows from \cite{MP2} that there  exist $C, c_1, c_2 > 0$ such that 
$$\left| H(t/m, \tau(m)) \right| \le C e^{-c_1 m} e^{-c_2 m}$$
for all $m \ge m_0$ and $0<t \le 1$. Recall that $I(t,\tau(m)) = \vol(\Ob) k_t^{\tau(m)} (1)$. We can switch back from $t/m$ to the variable $t$, then the identity contribution to $\log T_{\Ob}(\tau(m))$ is given by
$$ \frac{\vol(\Ob)}{2}  \frac{d}{ds} \left.\left(   \frac{1}{\Gamma(s)} \int_0^\infty t^{s-1} k_t^{\tau(m)} (1) dt \right)\right|_{s=0},$$
 As in \cite{MP2} we apply the Plancherel formula to $k_t^{\tau(m)}(1)$ and use the properties of Plancherel polynomials. We are left with the contribution from the elliptic elements. An important ingredint is now to apply the result of our previous paper \cite{Fe1}, which says that there exist polynomials $P^\gamma(i \nu)$ such that 
$$  E(t,\tau(m)) = \sum_{\{\gamma   \} \, \textnormal{elliptic}} \vol(\Gamma_\gamma \bs G_\gamma) \sum_{\sigma \in \widehat{M}} \int_\R P^\gamma_\sigma(i \lambda) \Theta_{\sigma,\lambda} (k_t^{\tau(m)}) d \lambda.$$
Above $\Theta_{\sigma,\lambda}$ is the character of the representation $\pi_{\sigma,\lambda}$ as in Subsection \ref{prince}. Using the properties of $P^\gamma_\sigma(i \nu)$, we obtain Theorem \ref{adacupcakeprincess}.

\subsection*{Structure of the article.} 
In Section \ref{howcan} we fix notation and collect some facts about the analytic torsion and the Selberg trace formula. In Sections \ref{searchedfor} and \ref{iamgone} we prove Theorem \ref{cuppy} and Theorem \ref{adacupcakeprincess}, respectively, using some technical lemmas postponed to Section \ref{sec:AppendC}.
In Appendix \ref{sect:AppendixA} we explain that our proof of Theorem \ref{cuppy} cannot be extended to manifolds with singularities that are not orbifolds.

\subsection*{Acknowledgement}
The present paper is a part of the author’s PhD thesis. She would like to thank her supervisor\, Werner M\"uller for his constant support. Also the author would like to thank Dmitry Tonkonog and Julie Rowlett for reading the draft and correcting minor mistakes.

\section{Preliminaries}\label{howcan}
The purpose of this section is to introduce various notation and definitions, most importantly we define certain rays of representations of a discrete group acting on the hyperbolic space, and the Ray-Singer analytic torsion. We also recall our main computational tool, the Selberg trace formula for hyperbolic orbifolds. 
\subsection{Lie groups.}\label{subsection1}
Let $G = \Spin(1,2n+1)$, $K=\Spin(2n+1)$. Let $G = NAK$ be the standard Iwasawa decomposition of $G$, hence for each $g \in G$ there are uniquely determined elements $n(g) \in N$, $a(g) \in A$, $\kappa(g) \in K$ such that
$$ g = n(g) a(g) \kappa(g).$$
Let $M$ be the centralizer of $A$ in $K$, then 
$$M = \Spin(2n).$$ 
Denote the Lie algebras of $G$, $K$, $A$, $M$ and $N$ by 
$\mathfrak{g}$,
$\mathfrak{k}$,
$\mathfrak{a}$,
$\mathfrak{m}$
and
$\mathfrak{n}$, respectively. Define the standard Cartan involution $\theta:\mathfrak{g} \to \mathfrak{g}$ by
$$\theta(Y) = - Y^t, \quad Y \in \mathfrak{g}.$$ Let $H: G \to \mathfrak{a}$ be defined by
$$ H(g) := \log a(g).$$
Equipped with a certain invariant metric, $G/K$ is isometric to the hyperbolic space $\HH^{2n+1}$; see \cite[p.~6]{MP111}.

\subsection{Hyperbolic orbifolds.} For the definition of orbifolds and orbibundles we refer to \cite{Dry} or \cite{Buc}, but for the reader's convenience we recall the following definition:
\begin{definition}\label{orbicharty}
An orbifold chart on a topological space $X$ consists of a  contractible open subset $\widetilde{U}$ of $\R^n$, a finite group $G_U$ 
acting on $\widetilde{U}$ by diffeomorphisms, and a mapping
$\pi_U$ from $\widetilde{U}$ onto an open subset $U$ of $X$ inducing a homeomorphism from the orbit space $G_U \bs \widetilde{U}$ onto $U$.
\end{definition}
Now restrict to hyperbolic orbifolds. Consider a discrete subgroup $\Gamma \subset G$, $G=\Spin(1,2n+1)$ such that $\Ob = \Gamma \bs \HH^{2n+1}$ is compact. 
It follows that all elements of $\Gamma$ are either hyperbolic or
elliptic. 
\begin{definition}
$\gamma \in \Gamma$ is called  elliptic if it is of finite order.
\end{definition}
\begin{definition}
$\gamma \in \Gamma$ is called hyperbolic if 
$$ l(\gamma) := \inf_{x \in \HH^{2n+1}} d(x, \gamma x) > 0,$$
where $d(x,y)$ denotes the hyperbolic distance between $x$ and $y$.
\end{definition}
Any elliptic element $\gamma$ is conjugate to an element in $K$, so without loss of generality we may assume~$\gamma$ is of the form: 
\begin{equation}\label{pelmeni}
\gamma = \diag(\overbrace{\left( \begin{smallmatrix}
1 & 0  \\
0 & 1  \end{smallmatrix} \right), \ldots, \left( \begin{smallmatrix}
1 & 0  \\
0 & 1  \end{smallmatrix} \right)}^d, \overbrace{R_{\phi_{d+1}}, \ldots, R_{\phi_{n+1}}}^{n-d+1}),
\end{equation}
 where $R_\phi = \left(      \begin{smallmatrix} \cos \phi & \sin \phi \\ -\sin \phi & \cos \phi \end{smallmatrix}  \right)$. 
 A proposition follows immediately:
 
 \begin{Proposition} \label{pony}
The fixed point set of the map $x \mapsto \gamma \cdot x$, where $x \in \HH^{2n+1}$ and $\gamma \in \Gamma$ is elliptic as in \ref{pelmeni}, is of dimension $2d-1$. 
\end{Proposition}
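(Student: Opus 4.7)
The plan is to reduce the proposition to a linear-algebra computation in the hyperboloid model of $\HH^{2n+1}$. Since $\gamma$ is elliptic, it is conjugate in $G$ to an element of $K$; as conjugation by $g \in G$ is an isometry of $\HH^{2n+1}$ sending the fixed-point set of $\gamma$ onto that of $g \gamma g^{-1}$, this operation preserves the dimension of the fixed-point set. I may therefore assume that $\gamma$ is already in the normal form (\ref{pelmeni}) and, in particular, fixes the basepoint $o = eK \in \HH^{2n+1}$.

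Next I would realize $\HH^{2n+1} = G/K$ as the upper sheet of the hyperboloid $\{x \in \R^{2n+2} : -x_0^2 + x_1^2 + \cdots + x_{2n+1}^2 = -1, \ x_0 > 0\}$, on which $G$ acts via the covering $\Spin(1,2n+1) \to \mathrm{SO}^+(1,2n+1)$ by linear Lorentz isometries. Under this identification, the fixed-point set of $\gamma$ on $\HH^{2n+1}$ is exactly the intersection of the hyperboloid with the linear fixed subspace $V \subset \R^{2n+2}$ of the matrix (\ref{pelmeni}).

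Reading off the normal form, each $I_2$ block contributes a two-dimensional fixed subspace, while each nontrivial rotation $R_{\phi_j}$ (with $\phi_j \not\equiv 0 \pmod{2\pi}$, else it would be grouped with the $I_2$'s) fixes only the origin of its two-plane. Hence $V$ is spanned by the first $2d$ coordinates, and $V \cap \HH^{2n+1}$ is a totally geodesic copy of $\HH^{2d-1}$, whose dimension is $2d-1$ as claimed. The one point requiring care is matching the Lorentz signature with the block decomposition: because $\gamma \in K$ fixes $o$, the timelike direction must lie in $V$, which forces the first $I_2$ block of (\ref{pelmeni}) to absorb the time coordinate together with a single spacelike fixed direction (so $V$ has Lorentz signature $(1,2d-1)$ and genuinely meets the hyperboloid). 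This bookkeeping is the only nonroutine step and is precisely what produces the odd dimension $2d-1$ rather than $2d$.
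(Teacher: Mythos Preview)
Your argument is correct. The paper gives no proof at all---it simply states that the proposition ``follows immediately'' from the normal form~(\ref{pelmeni})---and your proposal is precisely the linear-algebra computation in the hyperboloid model that makes this immediacy explicit: the $d$ identity $2\times 2$ blocks cut out a $2d$-dimensional linear fixed subspace of $\R^{2n+2}$ containing the timelike direction, whose intersection with the hyperboloid is a totally geodesic $\HH^{2d-1}$.
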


\subsection{Lie algebras}\label{liealg}
Denote by $E_{i,j}$ the matrix in $\mathfrak{g}$ whose $(i,j)$'th entry is 1 and the other entries are~0. Let
\begin{equation*}
\begin{gathered}
H_1 := E_{1,2} + E_{2,1},\\
H_j := i (E_{2j-1,2j} - E_{2j,2j-1}), \quad j = 2, \ldots, n+1.
\end{gathered}
\end{equation*}
Then $\mathfrak{a} = \R H_1$ and let  $\mathfrak{b} = i \R H_2 + \ldots + i \R H_{n+1}$ be the standard Cartan subalgebra of $\mathfrak{m}$. Moreover, $\mathfrak{h} = \mathfrak{a} \oplus \mathfrak{b}$ is a Cartan subalgebra of $\mathfrak{g}$. Define $e_i \in \mathfrak{h}_{\C}^*$ with $i = 1, \ldots, n+1$, by
\begin{equation}\label{makaronina}
 e_i(H_j) = \delta_{i,j}, \, 1\le i,j\le n+1.
\end{equation}
The sets of roots of $(\mathfrak{g}_\C, \mathfrak{h}_{\C})$ and   $(\mathfrak{m}_{\C}, \mathfrak{b}_{\C})$ are given by
\begin{equation}\label{nichegouzhenesvyazano}
\begin{gathered}
\Delta(\mathfrak{g}_\C, \mathfrak{h}_{\C}) = \{ \pm e_i \pm e_j, 1 \le i < j \le n+1\},\\
\Delta(\mathfrak{m}_{\C}, \mathfrak{b}_{\C}) = \{  \pm e_i \pm e_j, 2\le i < j \le n+1   \}.
\end{gathered}
\end{equation}
We fix the positive systems of roots by 
\begin{equation}\label{mojbambino}
\begin{gathered}
\Delta^+(\mathfrak{g}_\C, \mathfrak{h}_{\C}) = \{  e_i \pm e_j, 1 \le i < j \le n+1\},\\
\Delta^+(\mathfrak{m}_{\C}, \mathfrak{b}_{\C}) = \{   e_i \pm e_j, 2\le i < j \le n+1   \}.
\end{gathered}
\end{equation}
The half-sum of positive roots $\Delta^+(\mathfrak{m}_{\C}, \mathfrak{b}_{\C})$ equals 
\begin{equation}\label{halfsummy}
\rho_M = \sum_{j=2}^{n+1} \rho_j e_j,\quad \rho_j = n+1-j.
\end{equation}
Let $M'$ be the normalizer of $A$ in $K$ and let $W(A)=M'/M$ be the restricted Weyl group. It has order~2 and acts on finite-dimensional representations of $M$ \cite[p.~18]{Pf}. Denote by $w_0$ the non-identity element of $W(A)$.

\subsection{Principal series parametrization.}\label{prince} Let $\sigma: M \mapsto \End(V_\sigma)$ be a finite-dimensional irreducible representation of $M$. 
\begin{definition}
We define $\mathcal{H}^\sigma$ to be the space of measurable functions $f:K\mapsto V_\sigma$ such that
\begin{enumerate}
\item $f(mk)=\sigma(m) f(k)$ for all $k \in K$ and $m \in M$;
\item $\int_K || f(k)||^2 dk < \infty$.
\end{enumerate}
\end{definition}
Recall $H: G \to \mathfrak{a}$, $\kappa: G \to K$ is as in Subsection \ref{subsection1} and $e_1 \in \mathfrak{h}_\C^*$ are as in Subsection \ref{liealg}. For $\lambda \in \R$ define the representation $\pi_{\sigma, \lambda}$ of $G$ on $\mathcal{H}^\sigma$ by the following formula:
$$ \pi_{\sigma,\lambda} (g) f(k) := e^{(i \lambda e_1 + \rho) (H(kg))} f(\kappa(kg)),$$
where $f\in \mathcal{H}^\sigma$, $g\in G$.
\subsection{Representations.} Fix $\tau_1, \ldots, \tau_{n+1} \in \N,$ such that $\tau_1 \ge \tau_2 \ge \ldots \ge \tau_{n+1}$. Recall that $n~=~\frac{\dim(\Ob)-1}{2}$.
\begin{definition}\label{mimirep}
For $m \in \N$ denote by $\tau(m)$ the finite-dimensional representation of $G$ with highest weight 
$$(m + \tau_1)e_1 + \ldots + \ldots (m+\tau_{n+1}) e_{n+1}.$$
\end{definition}
This is a ray of representations which will be the focus of the article.
\begin{definition}\label{dertttt} Let $\tau$ be the finite-dimensional irreducible representation of $G$ with highest weight $\tau_1 e_1 + \ldots + \tau_{n+1} e_{n+1}$.
The denote by $\sigma_{\tau,k}$ be the representation of $M$ with  highest weight
$$\Lambda_{\sigma_{\tau,k}}:= (\tau_2 +1)e_2 + \ldots + (\tau_k +1)e_{k+1}+\tau_{k+2}e_{k+2} + \ldots + \tau_{n+1}e_{n+1}.$$
\end{definition}

\subsection{Analytic torsion.}\label{senbernary}
Let $E \to \Ob$ be a flat vector orbibundle over a compact Riemannian orbifold~$(\Ob, g)$; pick a Hermitian fiber metric $h$ in $E$. Denote by  $\Delta_p(h, g)$ the Laplacian acting on $E$-valued $p$-forms on $\Ob$. Note that $\spec(\Delta_p(h, g))$ is semi-bounded, hence we can define $e^{-t \Delta_p(h, g)}$ by a suitable Dunford integral as in \cite{Gil}.
\begin{definition}
The zeta function $\zeta_p(s;h,g)$ is
$$\zeta_p(s;h,g) := \frac{1}{\Gamma(s)} \int_0^\infty t^{s-1} \Tr (e^{-t \Delta_p(h, g)} (1-P)) dt,$$
where $P$ is the orthogonal projection to $\ker \Delta_p(h,g)$.
\end{definition}
It is holomorphic for $\RRe (s) < - (2n+1)/2$ and admits a meromorphic extension to $\C$.
It follows from \cite{Gil} and \cite{Fe1} that $\zeta_p(s;h,g)$ is regular at $0$.
\begin{definition}\label{analtorry}
The analytic torsion $T_{\Ob}(h,g)$ is
$$ \log T_\Ob(h,g) := \frac{1}{2} \sum_{p=1}^d (-1)^p p \frac{d}{ds} \left. \zeta_p(s;\rho) \right|_{s=0}.$$
\end{definition}
Now let $\Ob = \Gamma \bs \HH^{2n+1}$, let $\rho$ be a finite-dimensional representation of $\Gamma$ in a complex vector space~$V_\rho$ and  $E_\rho \to \Ob$ be the associated flat vector bundle. Pick a Hermitian fiber metric $h$ in $E_\rho$. 

Let us specify to the case when $\rho = \tau|_\Gamma$ is the restriction to $\Gamma$ of a finite-dimensional irreducible representation $\tau$ of $G$. In this case $E_\rho$ can be equipped with a distinguished metric which is unique up to scaling. Namely, $E_\rho$ is canonically isomorphic to the locally homogeneous orbibundle~$E_\tau$ associated to $\tau|_K$   \cite[Proposition 3.1]{MaMu}. Moreover, there exists a unique up to scaling inner product~$\langle \cdot, \cdot \rangle$ on $V_\rho$ such that
\begin{enumerate}
\item $\langle \tau(Y) u, v \rangle = - \langle u, \tau(Y) v \rangle, \quad  Y \in \mathfrak{k}$,
\item $\langle \tau(Y) u, v \rangle =  \langle u, \tau(Y) v \rangle, \quad  Y \in \mathfrak{p}$,
\end{enumerate}
for all $u,v \in V_\rho$. Note that $\tau|_K$ is unitary with respect to this inner product, hence it induces a unique up to scaling metric~$h$ on $E_\tau$.
\begin{definition}
Such a metric on $E_\tau$ is called admissible.
\end{definition}
From now on fix an admissible metric $h$. 
\begin{definition}\label{analtor}
In the above setting denote $T_\Ob(\rho):=T_\Ob(h,g)$; here $\rho$ indicates the dependence of the analytic torsion on the orbibundle $E_\rho$, and $T_\Ob(h,g)$ is from Definition \ref{analtorry}.
\end{definition} 
Let $\Delta_p(\tau)$ be the Hodge-Laplacian on $\Lambda^p(\Ob, E_\tau)$ with respect to an admissible metric in $E_\tau$. Denote 
\begin{equation}\label{kmikmir}
 K(t,\tau) := \sum_{p=1}^{2n+1} (-1)^p \, p \, \Tr\left( e^{-t \Delta_p (\tau)}\right).
\end{equation}
Let $\ker \Delta_p(\tau) = \{0\}$, then by Definition \ref{analtor}
\begin{equation}\label{naushki}
\log T_{\Ob}(\rho)=\frac{1}{2} \frac{d}{ds} \left.\left(   \frac{1}{\Gamma(s)} \int_0^\infty t^{s-1} K(t,\tau) dt \right)\right|_{s=0}.
\end{equation}
The right hand side of the formula is defined near $s=0$ by analytic continuation of the Mellin transform. Let $\w{E}_{\nu_p(\tau)} := G \times_{\nu_p(\tau)} \Lambda^p \mathfrak{p}^* \otimes V_\tau$, where $\nu_p(\tau):=\Lambda^p \Ad^* \otimes \tau: K \mapsto GL(\Lambda^p \mathfrak{p}^* \otimes V_\tau)$ and let $\w{\Delta}_p(\tau)$ be the lift of $\Delta_p(\tau)$ to $C^\infty(\HH^{2n+1}, \w{E}_{\nu_p(\tau)})$. Denote by $H_t^{\tau,p}:G \mapsto \End(\Lambda^p \mathfrak{p}^* \otimes V_\tau)$ the convolution kernel of $e^{-t \w{\Delta}_p(\tau)}$ as in \cite[p.~16]{MP111}. Let 
\begin{equation}\label{ananas}
 h_t^{\tau,p}(g):=\tr H_t^{\tau,p}(g),
\end{equation}
where $\tr$ denotes the trace in $\End(V_\nu)$. Put
$$ k_t^\tau(g) := \sum_{p=1}^{2n+1} (-1)^p  \, p \, h_t^{\tau,p}(g).$$
Then as in \cite[(4.15)]{MP111} we obtain from (\ref{kmikmir}) the following equality
 \begin{equation}\label{Selbergtrace}
 K(t,\tau)=\int_{\Gamma \bs G} \sum_{\gamma \in \Gamma} k_t^\tau(g^{-1} \gamma g) d \dot{g}.
 \end{equation}
\subsection{Selberg trace formula.} Collect the terms in the right hand side of (\ref{Selbergtrace}) according to their conjugacy classes:
\begin{equation}\label{heydomino}
K(t,\tau)=\sum_{\{\gamma \}} \int_{G_\gamma \bs G}  k_t^\tau(g^{-1} \gamma g) d \dot{g}.
\end{equation}
Above, $\{ \gamma \}$ denotes the conjugacy class of $\gamma$; $G_\gamma$ is the centralizer of $\gamma$ in $G$. Decompose (\ref{heydomino}) as a sum of  three terms:
\begin{equation}\label{sagihrdassichweine}
K(t,\tau)=I(t,\tau)+H(t,\tau)+H(t,\tau),
\end{equation}
where
\begin{equation}\label{brodski}
\begin{gathered}
I(t,\tau):=\vol(\Ob) k_t^\tau(e),\\
H(t,\tau):=\sum_{\{\gamma\}\,\textnormal{hyperbolic}} \int_{G_\gamma \bs G} k_t^\tau(g^{-1} \gamma g) d\dot{g},\\
E(t,\tau):=\sum_{\{\gamma\}\,\textnormal{elliptic}} \int_{G_\gamma \bs G} k_t^\tau(g^{-1} \gamma g) d\dot{g}.
\end{gathered}
\end{equation}
There exists an even polynomial $P_\sigma(i \lambda)$ such that \cite[Theorem 13.2]{Kna} 
\begin{equation}\label{icont}
 I(t,\tau) = \vol(\Ob) \sum_{\sigma \in \widehat{M}} \int_\R P_\sigma(i \lambda) \Theta_{\sigma,\lambda} (k_t^\tau) d \lambda,
\end{equation}
where $\Theta_{\sigma,\lambda}$ is the character of the representation $\pi_{\sigma,\lambda}$ as in Subsection \ref{prince}. There also exist even polynomials  $P_\sigma^\gamma(i \nu)$ such that \cite{Fe1} 
\begin{equation}\label{econt}
 E(t,\tau) = \sum_{\{\gamma   \} \, \textnormal{elliptic}} \vol(\Gamma_\gamma \bs G_\gamma) \sum_{\sigma \in \widehat{M}} \int_\R P^\gamma_\sigma(i \lambda) \Theta_{\sigma,\lambda} (k_t^\tau) d \lambda,
\end{equation}
where  $\Gamma_\gamma$ is the centralizer of $\gamma$ in  $\Gamma$. The sum in (\ref{icont}) and (\ref{econt}) is finite by the following proposition:
\begin{Lemma}\cite[Proposition 4.2]{MP111}\label{exitlight}
The character $\Theta_{\sigma, \lambda} (k^\tau_t)$ equals
\begin{equation*}
\Theta_{\sigma, \lambda} (k^\tau_t)=
\begin{cases}
e^{-t(\lambda^2 + \lambda_{\tau,k}^2)}, & \sigma\in \{ \sigma_{\tau,k}, w_0 \sigma_{\tau,k}\}  \textnormal{ for some } k,\\
0, & \textnormal{otherwise}.
\end{cases}
\end{equation*}
Above,  $w_0$ is as in Subsection \ref{liealg},
\begin{equation}\label{kittypuffy}
\lambda_{\tau, k} := \tau_{k+1}+n-k,
\end{equation}
$\tau_{k+1}$ and $\sigma_{\tau,k}$ are as in Definition \ref{dertttt}.
\end{Lemma}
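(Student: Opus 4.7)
The plan is to evaluate each individual term $\Theta_{\sigma,\lambda}(h_t^{\tau,p})$ via Casimir arguments, then combine them using the weighted alternating sum $\sum_p (-1)^p p$ to extract the surviving principal series.

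First, by Kuga's lemma the lift $\w{\Delta}_p(\tau)$ to $C^\infty(G, \Lambda^p\mathfrak{p}^*\otimes V_\tau)$ coincides with $-R(\Omega) + \tau(\Omega)\Id$, where $\Omega$ is the Casimir of $\mathfrak{g}$ and $R$ the right regular action. The Casimir acts on $\pi_{\sigma,\lambda}$ by a scalar of the form $-\lambda^2 - \|\rho\|^2 + \|\Lambda_\sigma + \rho_M\|^2 - \|\rho_M\|^2$. Taking the fibrewise trace of the heat kernel and applying Frobenius reciprocity to relate $K$-types of $\mathcal{H}^\sigma$ to $M$-types of $\nu_p(\tau)$, I would obtain the formula
\begin{equation*}
\Theta_{\sigma,\lambda}(h_t^{\tau,p}) = [\nu_p(\tau)|_M : \sigma] \cdot e^{-t(\lambda^2 + c(\sigma,\tau))},
\end{equation*}
where $c(\sigma,\tau) = \|\Lambda_\sigma + \rho_M\|^2 - \|\Lambda_\tau + \rho\|^2$.

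Second, I would compute the multiplicities $[\nu_p(\tau)|_M : \sigma]$ via the classical branching rule from $K = \Spin(2n+1)$ down to $M = \Spin(2n)$. Using $\mathfrak{p}^*|_M \simeq \R \oplus \mathfrak{p}_M^*$ one has $\Lambda^p \mathfrak{p}^*|_M \simeq \Lambda^p \mathfrak{p}_M^* \oplus \Lambda^{p-1}\mathfrak{p}_M^*$; combined with the branching of $\tau|_M$ and a tensor-product computation, $[\nu_p(\tau)|_M : \sigma]$ becomes a sum of $0/1$ indicator contributions coming from admissible weight-shifts of the highest weight of $\tau$.

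Third --- and this is the crucial cancellation --- summing with the weights $(-1)^p p$ telescopes: most admissible weight-shifts pair up and cancel, leaving only the contributions indexed by $k = 0, \ldots, n$ that correspond to $\sigma_{\tau,k}$ (and its Weyl conjugate $w_0\sigma_{\tau,k}$, since $W(A) = \{1,w_0\}$ acts nontrivially on $\widehat{M}$). For these surviving $\sigma$, a direct computation using (\ref{halfsummy}) and Definition \ref{dertttt} shows that the intermediate coordinates $e_{2},\ldots, e_{n+1}$ in the squared-norm difference telescope out, leaving $c(\sigma_{\tau,k},\tau) = (\tau_{k+1} + n - k)^2 = \lambda_{\tau,k}^2$, which gives the claimed identity.

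\textbf{Main obstacle.} The combinatorial cancellation in the third step is the technical heart of the argument. A plain alternating sum over $p$ would yield an Euler-characteristic type invariant; the extra factor $p$ converts this into a weighted sum whose closed form depends on carefully matching the surviving admissible shifts with $\sigma_{\tau,k}$. The bookkeeping that forces the precise weight formula $\Lambda_{\sigma_{\tau,k}} = (\tau_2+1)e_2 + \ldots + (\tau_k+1)e_{k+1} + \tau_{k+2}e_{k+2} + \ldots + \tau_{n+1}e_{n+1}$ of Definition \ref{dertttt} is the delicate portion; once this is in place, the exponent identity $c(\sigma_{\tau,k},\tau) = \lambda_{\tau,k}^2$ is a one-line telescoping calculation.
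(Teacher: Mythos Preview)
The paper does not supply a proof of this lemma at all: it is stated with the citation \cite[Proposition 4.2]{MP111} and used as a black box. There is therefore nothing in the present paper to compare your argument against beyond the reference itself.

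That said, your outline is the standard route taken in the cited source (M\"uller--Pfaff): Kuga's formula reduces $\w{\Delta}_p(\tau)$ to $-R(\Omega)+\tau(\Omega)$, the principal-series Casimir eigenvalue gives the exponential factor, and Frobenius reciprocity converts the $K$-type count into the $M$-multiplicity $[\nu_p(\tau)|_M:\sigma]$. The alternating sum $\sum_p(-1)^p p\,[\nu_p(\tau)|_M:\sigma]$ is then handled in \cite{MP111} not by a direct telescoping but via a Kostant-type identity (a Lie-algebra cohomology / ``$\mathfrak{n}$-cohomology'' computation) that identifies the surviving $\sigma$'s as exactly the $\sigma_{\tau,k}$ and their $W(A)$-conjugates. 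Your description of the third step as ``pairing up and cancelling'' is morally right but understates the mechanism; the clean bookkeeping that produces precisely the highest weights in Definition~\ref{dertttt} comes from Kostant's theorem on $H^*(\mathfrak{n},V_\tau)$ rather than from an ad hoc matching of admissible shifts. One minor sign/normalisation issue: with the conventions of \S\ref{liealg} the Laplace eigenvalue is $\lambda^2 + \|\Lambda_\tau+\rho\|^2 - \|\Lambda_\sigma+\rho_M\|^2$, so your $c(\sigma,\tau)$ should carry the opposite sign; this does not affect the final identity $c(\sigma_{\tau,k},\tau)=\lambda_{\tau,k}^2$, which indeed follows from the telescoping you describe.
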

\begin{remark}\label{entersandman}
The polynomials $P_\sigma^\gamma(i \nu)$ and $P_\sigma(i \nu)$ are invariant under the action of $w_o \in W(A)$:
$$ P_\sigma(i \nu) = P_{w_0 \sigma} (i \nu), \quad  P^\gamma_\sigma(i \nu) = P^\gamma_{w_0 \sigma} (i \nu).$$
\end{remark}

\subsection{$L^2$-torsion.} Let $\widetilde{\Delta}_p(\tau)$ be as in Subsection \ref{senbernary} and $\tau(m)$ be as in Definition \ref{mimirep}.
Denote by $\Tr_\Gamma (e^{-t \widetilde{\Delta}_p(\tau(m))})$ the $\Gamma$-trace of $e^{-t \widetilde{\Delta}_p(\tau(m))}$ on $\HH^{2n+1}$ as in \cite{lo}. It follows that 
$$ \textnormal{Tr}_\Gamma \, e^{-t \widetilde{\Delta}_p(\tau(m))} = \vol(\Ob) h_t^{\tau(m),p}(1),$$
where $h_t^{\tau(m),p}$ is as in (\ref{ananas}); see \cite{MP2}. Let $I(t,\tau(m))$ be as in (\ref{brodski}), then
$$\sum_{p=1}^{2n+1} \textnormal{Tr}_\Gamma \, e^{-t \widetilde{\Delta}_p(\tau(m))} = I(t,\tau(m)).$$
\begin{definition}
The $L^2$-torsion is defined by
$$ \log T_\Ob^{(2)} (\tau(m)) := \frac{1}{2}\frac{d}{ds} \left(\left. \frac{1}{\Gamma(s)} \int_{\R} t^{s-1}   \sum_{p=1}^{2n+1} \textnormal{Tr}_\Gamma \, e^{-t \widetilde{\Delta}_p(\tau(m))}  \right)\right|_{s=0}$$
for sufficiently large $m > m_0$, where $m_0$ is as in \cite[Proposition 5.3]{MP2}.
\end{definition}
\section{Analytic torsion under metric variation}\label{searchedfor}

In this section we study the Ray-Singer analytic torsion $T_\Ob (h,g)$  of a compact odd-dimensional Riemannian orbifold 
$(\Ob, g)$ with odd-dimensional singular strata; see Definition \ref{analtorry}. The main goal is to establish the invariance of the analytic torsion under certain deformations of the metric $g$ which we will now specify. 

\subsection{Admissible deformations of the metric}
Consider an orbifold atlas consisting of charts $(\widetilde{U}_i, G_i, \pi_i)$ as in Definition~\ref{orbicharty}.
\begin{definition}
By a smooth family of metrics on $\Ob$ we mean a collection of $G_i$-invariant metrics~$g_i(u)$, $u\in[0,1]$ on the orbifold charts  $\widetilde{U}_i$, depending smoothly on $u$ and for each $u$ satisfying the gluing condition saying that they define some metric $g(u)$ on $\Ob$. 
\end{definition}

\begin{exmp}\label{exexgirl}
Let $g$ be a metric on an orbifold $\Ob$ and  $m \in \R$. The family of metrics $\lambda \cdot g$, $\lambda \in [1,m]$ is a smooth family of metrics. 
\end{exmp}

\subsection{Deformation of the analytic torsion.}

Let $\Ob$ be an orbifold equipped with a smooth family of metrics $g(u)$, $u\in[0,1]$, and let $E \to \Ob$ be a flat orbibundle equipped with a Hermitian metric~$h$. Denote by $\Delta(u)^k$ the Laplacian acting on $E$-valued $k$-forms of $\Ob$. 
 Note that for different values of~$u$ the Laplacians $\Delta(u)^k$ act on different Hilbert spaces 
$L^2 \Omega^k(u)(\Ob)$. However, we can identify these spaces by
a natural isometry 
$$ T(u) : L^2 \Omega^k(u)(\Ob) \mapsto L^2\Omega^k(0)(\Ob),$$
$$ T(u) : f(x) \mapsto \left( \frac{\det g(0)(x)}{\det g(u)(x)} \right)^\frac14 \cdot f(x).$$
The operator $T(u)$ maps $H^2\Omega^k(u)(\Ob)$ to $H^2\Omega^k(0)(\Ob)$, hence it is an isometry between $\dom \Delta(u)^k$ and $\dom \Delta(0)^k$.
\begin{remark}\label{unforgiven}
 Let $X$ be a Riemannian manifold with isolated conical singularities which is not an orbifold, and let $g(u)$ be a family of metrics on $X$. 
Consider the corresponding family of Laplace-Beltrami operators $\Delta(u)$. It may happen that for some families $g(u)$ 
the multiplication by $\left( \frac{\det g_0(x)}{\det g_u(x)} \right)^\frac14$
is not an isometry between $\dom \Delta(u)$ and $\dom \Delta(0)$, even 
taking into account the possibility of different self-adjoint extensions of Laplacians, see Appendix \ref{sect:AppendixA} for details. Still there exists a  class of metric deformations for which $T(u)$ is an isometry between the domains of Laplacians \cite{MuVe}.
\end{remark}

Define the self-adjoint operators 
\begin{equation}\label{eq:HKU}
 H_k(u) := T(u) \circ \Delta_k(u) \circ T(u)^{-1}
\end{equation}
 with the fixed domain $\dom H_k(u) = H^2 \Omega^k(0)(\Omega)$. 
To establish the invariance of the analytic torsion we follow \cite{MuVe},
the key steps of whose proof we repeat here for the reader's convenience. Though our case differs from the one considered there, 
the necessary properties of all operators remain the same.
Namely, from the semigroups properties it follows that for any $u, u_0 \in [0,1]$,
\begin{equation*}
 \frac{e^{-t H_k (u)} - e^{- t H_k(u_0)}}{u-u_0} = \int_0^t e^{-(t-s)H_k(u_0)} \cdot \frac{H_k(u_0) - H_k(u)}{u-u_0} \cdot e^{- s H_k(u_0)} ds.
\end{equation*}
Taking $u \to u_0$ gives
\begin{equation*}
 \frac{\partial}{\partial u} \left. \Tr e^{-t H_k(u)} \right|_{u=u_0} = 
-t \cdot   \Tr \left(   \left.  \frac{\partial H_k(u)}{\partial u}   \right|_{u=u_0} â€¦\cdot e^{- t H_k(u_0)} \right).
\end{equation*}
From (\ref{eq:HKU}) and the commutativity of bounded operators under the trace we get
\begin{equation*}
 \frac{\partial}{\partial u} \left. \Tr e^{-t H_k(u)} \right|_{u=u_0} = - t \cdot \Tr \left(   \left.  \frac{\partial \Delta_k(u)}{\partial u}   \right|_{u=u_0} â€¦\cdot e^{- t \Delta_k(u_0)} \right).\end{equation*}
\begin{remark}\label{rem1111}
The operator on the right hand side of the latter equation 
is well-defined: although $\Delta(u)$ have different domains for different $u$, 
the exponent is an smoothing operator.  
\end{remark}

Let $*_u$ denote the Hodge-star operator associated with $g(u)$, and put 
$\alpha_u^k := *^{-1}_u \cdot \frac{\partial *}{\partial u},$
where $k$ denotes the restriction to the forms of degree $k$. By the same arguments as in \cite[p.153]{RS}, 
\begin{equation}\label{eq:rasddds}
\sum_{k=0}^{\dim{\Ob}} (-1)^k \cdot  k \cdot  \Tr \left( \frac{\partial \Delta_k(u)}{\partial u} \cdot   e^{-t \Delta_k(u)} \right) = 
\frac{\partial}{\partial t} \sum_{k = 0}^{\dim \Ob} (-1)^k \cdot  \Tr\left(\alpha_u^k e^{- t \Delta_k(u)}\right).
\end{equation}
\begin{remark}
 The order of composition of the operators in (\ref{eq:rasddds}) is swapped as compared to \cite[p.153]{RS}. 
 The reason is the same as in Remark \ref{rem1111}.
\end{remark}
Now put 
\begin{equation}\label{eq:fus}
 f(u,s) := \frac{1}{2} \sum_{k=0}^{\dim \Ob} (-1)^k \cdot k \cdot \frac{1}{\Gamma(k)} \int_0^\infty t^{s-1} \cdot  \Tr  e^{-t \Delta_k(u)}  dt.
\end{equation}
By the exponential decay of the heat trace we can differentiate the right hand sight 
of (\ref{eq:fus}) with respect to $u$. Together with (\ref{eq:rasddds}) this gives
\begin{equation}\label{eq:ufus}
 \frac{\partial f(u, s)}{\partial u} = \frac{1}{2} \sum_{k=0}^{\dim \Ob} 
 (-1)^k \frac{1}{\Gamma(s)} \int_0^\infty t^s \frac{d}{dt}  \Tr(\alpha_u^k e^{- t \Delta_k(u)}) dt.
\end{equation}
Suppose that $\ker \Delta_k(u) = \{ 0 \}$ for all $k$ and $u$. Then by definition,
\begin{equation}\label{eq:deftau} 
\log T_\Ob(h,g(u)) = \left. \frac{\partial}{\partial s} \right|_{s=0} f(u,s).
\end{equation}
By the same arguments as in \cite{Fe1}
\begin{equation}\label{ew:HTA}
\Tr \alpha_u^k e^{-t \Delta^k(u)} \sim 
\sum_{k=0}^\infty c_k t^{-n/2+k}+ \sum_{N \subset \text{ s. strata}} \sum_{k=0}^\infty d_k t^{-\dim N/2+k}
\end{equation}
for some $c_k, d_k \in \R$ as $t \to 0$. The second sum is taken over the singular strata $N$ of $\Ob$.
Putting together (\ref{eq:ufus}), (\ref{eq:deftau}) and (\ref{ew:HTA}), we obtain the following statement:
\begin{Th}
Let $\Ob$ be a Riemannian orbifold, and let $g(u), u \in [0,1]$ be a family 
of metrics on~$\Ob$. Suppose the cohomology $H^j(\Ob; E)$ vanishes for all $j$. 
Furthermore, let $l_k(u)$ denote the constant term of the asymptotic expansion (\ref{ew:HTA}). Then
$$\frac{\partial}{\partial u} \log T_\Ob(h, g(u)) = -1/2 \sum_{q=0}^{\dim{\Ob}} (-1)^q\, l_q(u).$$
\end{Th}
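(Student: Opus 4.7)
The strategy is to differentiate the identity $\log T_\Ob(h,g(u)) = \partial_s|_{s=0} f(u,s)$ of \eqref{eq:deftau} with respect to $u$, interchanging $\partial_u$ and $\partial_s|_{s=0}$, and then simplify $\partial_u f(u,s)$ using \eqref{eq:ufus}. The interchange is legitimate because $f(u,s)$ is jointly smooth in $u$ and holomorphic in $s$ in a neighbourhood of $s=0$; in particular the mixed partials exist and are continuous, which follows from the uniform-in-$u$ exponential decay of $\Tr e^{-t\Delta_k(u)}$ at infinity (guaranteed by $\ker \Delta_k(u)=\{0\}$ under the cohomology vanishing hypothesis) and the uniformity of the small-$t$ asymptotics \eqref{ew:HTA} in $u$.

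Starting from \eqref{eq:ufus}, I integrate by parts in $t$. For $\RRe(s)$ sufficiently large, the boundary term at $t=\infty$ vanishes because of the exponential decay of $\Tr(\alpha_u^k e^{-t\Delta_k(u)})$, and the boundary term at $t=0$ vanishes because $\Tr(\alpha_u^k e^{-t\Delta_k(u)}) = O(t^{-\max(\dim\Ob,\,\dim N)/2})$ by \eqref{ew:HTA}. This yields
$$\frac{\partial f(u,s)}{\partial u} \;=\; -\frac{s}{2} \sum_{k=0}^{\dim \Ob} (-1)^k \,\zeta_k^\alpha(s,u), \qquad \zeta_k^\alpha(s,u) := \frac{1}{\Gamma(s)} \int_0^\infty t^{s-1} \Tr\bigl(\alpha_u^k\, e^{-t\Delta_k(u)}\bigr)\, dt,$$
for $\RRe(s)$ large.

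Next I would establish that $\zeta_k^\alpha(s,u)$ is meromorphic on $\C$ and regular at $s=0$, with value $\zeta_k^\alpha(0,u) = l_k(u)$. This is the standard Mellin argument: split the integral at $t=1$, so the piece over $[1,\infty)$ is entire in $s$ by exponential decay, and substitute \eqref{ew:HTA} into the piece over $(0,1]$. Each term $t^{\alpha}$ in the expansion contributes $1/(s+\alpha)$ after integration, producing simple poles whose only possible accumulation at $s=0$ comes from a $t^0$-term in \eqref{ew:HTA}; the factor $1/\Gamma(s)$, having a simple zero at $s=0$, cancels any apparent pole and evaluates $\zeta_k^\alpha$ at $0$ to be precisely the constant coefficient $l_k(u)$. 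Hence the identity derived above extends by meromorphic continuation to all $s$, and since $\zeta_k^\alpha(\,\cdot\,,u)$ is holomorphic at $0$,
$$\frac{\partial}{\partial u}\log T_\Ob(h,g(u)) \;=\; \partial_s\bigl|_{s=0}\,\partial_u f(u,s) \;=\; -\frac{1}{2}\sum_{q=0}^{\dim \Ob}(-1)^q\,\zeta_q^\alpha(0,u) \;=\; -\frac{1}{2}\sum_{q=0}^{\dim \Ob}(-1)^q\, l_q(u),$$
using $\partial_s|_{s=0}[s\cdot \zeta_q^\alpha(s,u)] = \zeta_q^\alpha(0,u)$.

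The main obstacle is the verification that $\zeta_k^\alpha(s,u)$ is regular at $s=0$ with value exactly $l_k(u)$; this is a standard Seeley-type fact for closed manifolds but for orbifolds it rests on the full orbifold heat trace asymptotic \eqref{ew:HTA} from \cite{Fe1}, including the contributions from the singular strata. Secondary technicalities are the justification of integrating by parts (handled by meromorphic continuation from $\RRe(s)\gg 0$) and the exchange of $\partial_u$ with $\partial_s|_{s=0}$, both routine once uniformity in $u$ of the heat trace estimates is in place.
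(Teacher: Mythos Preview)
Your proposal is correct and follows essentially the same approach as the paper: the paper's proof consists of the sentence ``Putting together (\ref{eq:ufus}), (\ref{eq:deftau}) and (\ref{ew:HTA})'', and you have supplied precisely the standard details of that combination---integration by parts in $t$, meromorphic continuation of the resulting Mellin transform, and evaluation at $s=0$ via the constant term of the asymptotic expansion. Nothing is missing and nothing deviates from the intended line.
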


\begin{Corollary}\label{cor:metricind}
Assume $\dim \Ob$ is odd and all its singular strata are 
odd-dimensional as well, for example $\Ob = \Gamma \bs \HH^{2n+1}$; see Proposition \ref{pony}. Then $l_q(u) = 0$, and $\log T_\Ob(h, g(u))$ 
does not depend on~$u$. 
\end{Corollary}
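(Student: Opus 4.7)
The plan is to read off $l_q(u)$ directly from the asymptotic expansion \eqref{ew:HTA} and observe that the odd-dimensionality hypothesis on $\Ob$ and on all its singular strata forces this constant term to vanish, after which the preceding theorem gives the result instantly.

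First I would recall that by its very definition $l_q(u)$ is the coefficient of $t^0$ in the short-time expansion of $\Tr(\alpha_u^q e^{-t\Delta^q(u)})$. The expansion \eqref{ew:HTA} tells us that the exponents of $t$ which can occur are of two types: $-n/2+k$ coming from the bulk contribution (with $n=\dim \Ob$), and $-\dim N/2 + k$ coming from each singular stratum $N$, where $k$ ranges over $\Z_{\ge 0}$.

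Next, under the hypothesis that $n$ and every $\dim N$ are odd, each of these exponents is a half-integer with odd numerator; in particular none of them equals $0$. Consequently the coefficient of $t^0$ in the expansion is identically zero, so $l_q(u)=0$ for all $q$ and all $u \in [0,1]$. Substituting this into the formula of the preceding theorem gives
$$\frac{\partial}{\partial u}\log T_\Ob(h,g(u)) = -\frac{1}{2}\sum_{q=0}^{\dim \Ob}(-1)^q l_q(u) = 0,$$
so $u \mapsto \log T_\Ob(h,g(u))$ is constant on $[0,1]$. For the specific example $\Ob = \Gamma \bs \HH^{2n+1}$, oddness of $\dim \Ob = 2n+1$ is immediate, and by Proposition \ref{pony} each singular stratum has dimension $2d-1$, which is also odd, so the hypothesis applies.

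I do not anticipate a real obstacle here: the nontrivial analytic inputs---the precise shape of the heat-trace expansion \eqref{ew:HTA} (taken from \cite{Fe1}) and the differentiation identity of the preceding theorem---have already been established, and the corollary itself reduces to a parity observation combined with a direct substitution. The only place one has to be slightly careful is in confirming that \eqref{ew:HTA} really produces no additional integer-power terms from the singular strata, but this is exactly what the cited expansion asserts.
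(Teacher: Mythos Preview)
Your argument is correct and is exactly the reasoning the paper intends: the corollary is stated without a separate proof precisely because it follows immediately from the theorem by the parity observation you spell out, namely that with $\dim\Ob$ and all $\dim N$ odd the exponents in \eqref{ew:HTA} are genuine half-integers and hence never zero, forcing $l_q(u)=0$.
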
 
We will only need Corollary \ref{cor:metricind} for the families of metrics  $g(u)$ from Example \ref{exexgirl}.

\section{Analytic and $L^2$-torsion}\label{iamgone}
In this section we establish the asymptotic behavior of the 
analytic and the $L^2$-torsion for odd-dimensional hyperbolic orbifolds $\Ob = \Gamma \bs \HH^{2n+1}$. We will refer to some technical lemmas from Section~\ref{sec:AppendC}. Our main results are the following two theorems.

\begin{Th}\label{th1111}
Let $\tau(m)$ be as in Definition \ref{mimirep}. Then the difference between the analytic torsion~$T_\Ob (\tau(m))$ and the $L^2$-torsion $T^{(2)}_\Ob (\tau(m))$ can be expressed as:
$$\log T_\Ob (\tau(m)) - \log T^{(2)}_\Ob (\tau(m)) =  PE(m) + O(e^{-c_1 m}), \quad m \to \infty $$
 where $PE(m)$  is a pseudopolynomial in $m$ of  
degree $\frac{d^2+d+2}{2}$ with $d$ being the 
maximal dimension of a singular stratum of $\Ob$.
\end{Th}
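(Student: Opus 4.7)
The plan is to show that the difference of the two torsions equals the sum of the hyperbolic and elliptic contributions in the Selberg trace formula, prove the hyperbolic part is $O(e^{-cm})$, and extract the pseudopolynomial $PE(m)$ from the elliptic part. By Corollary \ref{cor:metricind}, I can rescale the metric $g$ on $\Ob$ by $m$, replacing $\Delta_p(\tau(m))$ by $\frac{1}{m}\Delta_p(\tau(m))$ in (\ref{lordofmercy}); the same rescaling applies to the $L^2$-torsion for $m$ large, since the spectral gap of $\tau(m)$ ensures no logarithmic correction arises. Splitting $\int_0^\infty = \int_0^1 + \int_1^\infty$, the tail integral is $O(e^{-cm})$ by the spectral gap estimates of \cite{MP2}, so the work concentrates on the integral over $[0,1]$.

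Applying the Selberg trace formula on $[0,1]$ gives
$$K(t/m, \tau(m)) = I(t/m, \tau(m)) + H(t/m, \tau(m)) + E(t/m, \tau(m)).$$
By Subsection 2.7, the Mellin transform of $I(t/m,\tau(m))$ reproduces $\log T^{(2)}_\Ob(\tau(m))$ (up to the exponentially small tail). The hyperbolic piece satisfies $|H(t/m,\tau(m))| \le Ce^{-cm}$ for $0<t\le 1$ and $m\ge m_0$ by \cite{MP2}, so it contributes $O(e^{-cm})$ after the Mellin transform. Hence
$$\log T_\Ob(\tau(m)) - \log T^{(2)}_\Ob(\tau(m)) = \frac{1}{2}\frac{d}{ds}\left.\left(\frac{1}{\Gamma(s)}\int_0^1 t^{s-1} E(t/m,\tau(m))\, dt\right)\right|_{s=0} + O(e^{-cm}).$$

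It remains to extract $PE(m)$ from the elliptic contribution. By (\ref{econt}) and Lemma \ref{exitlight},
$$E(t/m, \tau(m)) = \sum_{\{\gamma\}\,\textnormal{elliptic}} \vol(\Gamma_\gamma\bs G_\gamma)\sum_k \int_\R P^\gamma_{\sigma_{\tau(m),k}}(i\lambda)\, e^{-(t/m)(\lambda^2 + \lambda^2_{\tau(m),k})}\, d\lambda.$$
Substituting $\lambda \mapsto \sqrt{m}\,\lambda$ turns the Gaussian into $e^{-t\lambda^2}\cdot e^{-t\lambda^2_{\tau(m),k}/m}$; since $\lambda_{\tau(m),k}$ is linear in $m$, the second factor produces exponential localisation. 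Invoking the technical lemmas from Section \ref{sec:AppendC} to expand $P^\gamma_{\sigma_{\tau(m),k}}(i\sqrt{m}\lambda)$ in powers of $m$, and tracking the oscillatory phases $e^{im\phi_{j,k}}$ which arise from characters evaluated on the rotational blocks of $\gamma$ in the normal form (\ref{pelmeni}), one obtains, after the Mellin transform and derivative at $s=0$, the pseudopolynomial $PE(m)$ modulo $O(e^{-cm})$.

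The main obstacle is this last asymptotic analysis. One must track the polynomial dependence of $P^\gamma_{\sigma_{\tau(m),k}}$ on $m$ uniformly in $\lambda$, control all remainder terms under the $\sqrt{m}$-rescaling, and verify that the resulting pseudopolynomial has exactly the asserted degree $\frac{d^2+d+2}{2}$. The degree is dictated by the geometry of the centralizer $G_\gamma$, whose associated symmetric space has dimension $2d-1$ by Proposition \ref{pony}; the corresponding Plancherel-type polynomial $P^\gamma_\sigma$ then contributes terms of order $m^{\frac{d^2+d+2}{2}}$ along the rays $\tau(m)$, in parallel with the polynomial $PI(m)$ of degree $\frac{n^2+n+2}{2}$ controlling the identity contribution.
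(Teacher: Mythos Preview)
Your reduction up to the elliptic contribution is essentially the paper's: metric rescaling via Corollary~\ref{cor:metricind}, the split $\int_0^\infty=\int_0^1+\int_1^\infty$, the Selberg trace formula, the identification of $I$ with the $L^2$-torsion (Lemma~\ref{l2torsasymp}), and the exponential decay of the hyperbolic piece are all exactly as in Lemmas~\ref{analtorasymp}--\ref{l2torsasymp}.

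The genuine gap is your treatment of the elliptic term. Your proposed $\lambda\mapsto\sqrt{m}\,\lambda$ rescaling followed by an asymptotic expansion of $P^\gamma_{\sigma_{\tau(m),k}}(i\sqrt{m}\,\lambda)$ in powers of $m$ would at best produce an \emph{asymptotic series} in $m$, not the exact pseudopolynomial $PE(m)$ with remainder $O(e^{-cm})$ that the theorem asserts. The paper does something sharper: it evaluates the Mellin transform of $\int_\R e^{-t\lambda^2}P^\gamma_{\sigma_{\tau(m),k}}(i\lambda)\,d\lambda\cdot e^{-t\lambda_{\tau(m),k}^2}$ at $s=0$ in closed form, obtaining
\[
ME(\tau(m))=\sum_{\{\gamma\}\,\textnormal{elliptic}}\vol(\Gamma_\gamma\bs G_\gamma)\sum_{k=0}^n(-1)^k\int_0^{\lambda_{\tau(m),k}}P^\gamma_{\sigma_{\tau(m),k}}(t)\,dt,
\]
a finite integral of an explicit polynomial. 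The crucial ingredient you are missing is then Lemma~\ref{lemma:asymp1}: the alternating sum $\sum_{k=0}^n(-1)^k P^\gamma_{\sigma_{\tau(m),k}}(\nu)$ is \emph{constant} in $\nu$. This algebraic identity (not an asymptotic estimate) lets one split the integrals as in \eqref{XXX}--\eqref{XXXX} and read off $PE(m)$ exactly; Lemmas~\ref{lemma:asymp2} and~\ref{lemma:asymp4} then pin down the degree $\frac{d^2+d+2}{2}$. Your heuristic that ``$P^\gamma_\sigma$ contributes terms of order $m^{(d^2+d+2)/2}$'' has no mechanism to produce the cancellation encoded in Lemma~\ref{lemma:asymp1}, and the lemmas in Section~\ref{sec:AppendC} are not expansion lemmas in the sense you invoke them.
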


\begin{Th}\label{th11111}
We have that
$$ \log T^{(2)}_\Ob (\tau(m)) = PI(m) +  O(e^{-c_2 m}),  \quad m \to \infty,$$
where $PI(m)$ is a polynomial in $m$ of degree $\frac{n^2+n+2}{2}$, hence
by Theorem \ref{th1111}:
$$\log T_\Ob (\tau(m)) =  PI(m) + PE(m) + O(e^{-c_3  m}), \quad m \to \infty.$$
\end{Th}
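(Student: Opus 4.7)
The core observation driving the proof is that the $L^2$-torsion, being built from $\Gamma$-traces of heat operators on $\HH^{2n+1}$, depends on the orbifold $\Ob = \Gamma \bs \HH^{2n+1}$ only through its covolume. Indeed, interpreting the definition with the alternating-sum weighting of Subsection \ref{senbernary}, one has
$$ \sum_{p=1}^{2n+1} (-1)^p p \cdot \Tr_\Gamma\, e^{-t\widetilde{\Delta}_p(\tau(m))} \,=\, \vol(\Ob)\, k_t^{\tau(m)}(e) \,=\, I(t,\tau(m)), $$
so that
$$ \log T^{(2)}_\Ob(\tau(m)) \,=\, \frac{1}{2}\frac{d}{ds}\biggl[\frac{1}{\Gamma(s)}\int_0^\infty t^{s-1} I(t,\tau(m))\, dt\biggr]_{s=0}. $$
Because the orbifold structure of $\Ob$ enters here only through the scalar factor $\vol(\Ob)$, the asymptotic analysis is identical to the compact-manifold case treated in \cite[\S 5--6]{MP2}; quoting those results yields the first statement of the theorem. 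The second statement follows by adding Theorem \ref{th1111}, with $c_3 := \min(c_1,c_2)$.

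For completeness, here is how one reproduces the manifold computation. Applying the Plancherel formula (\ref{icont}) together with Lemma \ref{exitlight}, only $\sigma = \sigma_{\tau(m),k}$ and $\sigma = w_0\sigma_{\tau(m),k}$ give nonvanishing contributions, and for these
$$ \Theta_{\sigma,\lambda}(k_t^{\tau(m)}) \,=\, e^{-t(\lambda^2 + \lambda_{\tau(m),k}^2)}, \qquad \lambda_{\tau(m),k} \,=\, m + \tau_{k+1} + n - k. $$
Inserting into the Mellin integral, using $\int_0^\infty t^{s-1} e^{-t(\lambda^2 + c^2)}\, dt = \Gamma(s)(\lambda^2 + c^2)^{-s}$, and differentiating at $s=0$, I arrive at
$$ \log T^{(2)}_\Ob(\tau(m)) \,=\, -\vol(\Ob)\sum_{k=0}^n \varepsilon_k \int_\R P_{\sigma_{\tau(m),k}}(i\lambda)\, \log(\lambda^2 + \lambda_{\tau(m),k}^2)\, d\lambda $$
for appropriate signs $\varepsilon_k \in \{\pm 1\}$ coming from the $(-1)^p p$ weighting combined with Remark \ref{entersandman}.

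The polynomial $PI(m)$ then emerges by evaluating these logarithmic integrals in closed form. Writing $\log(\lambda^2 + c^2) = \log \lambda^2 + 2\int_0^c u(\lambda^2 + u^2)^{-1}\, du$ and integrating the even Plancherel polynomial $P_{\sigma_{\tau(m),k}}(i\lambda)$ against the Poisson-type kernel $u/(\lambda^2+u^2)$ via residues gives a polynomial expression in $c = \lambda_{\tau(m),k}$, and hence in $m$. Its leading order combines two sources of $m$-growth: the $m$-degree of $P_{\sigma_{\tau(m),k}}(i\lambda)$, bounded by the Weyl dimension formula applied to the shifted highest weight, and the additional growth from integrating up to $c \sim m$. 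Summing with the alternating signs $\varepsilon_k$ cancels the leading pieces that would otherwise blow up, and what survives is a polynomial of degree exactly $\frac{n^2+n+2}{2}$.

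The main obstacle is the final degree bookkeeping: one must track carefully the precise $m$-degree of each $P_{\sigma_{\tau(m),k}}$ via its explicit product-formula representation, and verify the cancellations among the alternating summands so that the surviving polynomial has exactly the stated degree and no residual logarithmic term. A secondary technical point is the error estimate $O(e^{-c_2 m})$, which arises from isolating the polynomial part of the full Mellin derivative; this is controlled by splitting the $t$-integral at a fixed threshold and invoking the pointwise heat-kernel bounds of \cite[Proposition 5.3]{MP2}, which transfer verbatim to the orbifold setting precisely because $k_t^{\tau(m)}(e)$ is independent of $\Gamma$.
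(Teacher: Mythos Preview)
Your high-level strategy is exactly the paper's: one observes that $\sum_p (-1)^p p\,\Tr_\Gamma e^{-t\widetilde{\Delta}_p(\tau(m))}=I(t,\tau(m))=\vol(\Ob)\,k_t^{\tau(m)}(e)$, so $\log T^{(2)}_\Ob(\tau(m))$ coincides with the identity contribution $\tfrac12 MI(\tau(m))$ and the problem reduces verbatim to the manifold computation of \cite{MP2}. The paper records this as Lemma~\ref{l2torsasymp} (citing \cite[(5.15)]{MP111}) and then invokes Theorem~\ref{ipryachutslyozy}, whose identity part is \cite[Corollary~5.7]{MP2}. Note that the equality $\log T^{(2)}_\Ob(\tau(m))=\tfrac12 MI(\tau(m))=PI(m)$ is \emph{exact}; the $O(e^{-c_2 m})$ in the statement is vacuous here, so your final paragraph about isolating an error term is unnecessary.

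Your ``for completeness'' derivation, however, contains a genuine gap. The expression
\[
\int_\R P_{\sigma_{\tau(m),k}}(i\lambda)\,\log(\lambda^2+\lambda_{\tau(m),k}^2)\,d\lambda
\]
is divergent: $P_{\sigma_{\tau(m),k}}(i\lambda)$ is a polynomial of degree $2n$ in $\lambda$, so the integrand grows like $\lambda^{2n}\log\lambda$. The interchange of the Mellin $t$-integral with the $\lambda$-integral that produced this expression is only valid for $\Re s$ large, and the analytic continuation to $s=0$ cannot be done termwise in this way. The correct closed form, which is what \cite{MP2} actually proves and what the paper quotes, is
\[
MI(\tau(m))\;=\;2\,\vol(\Ob)\sum_{k=0}^n(-1)^{k}\int_0^{\lambda_{\tau(m),k}} P_{\sigma_{\tau(m),k}}(t)\,dt,
\]
an integral of the Plancherel polynomial over a \emph{finite} interval. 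This is manifestly a polynomial in the endpoints $\lambda_{\tau(m),k}=m+\tau_{k+1}+n-k$, hence in $m$, and the degree count $\tfrac{n^2+n+2}{2}$ follows directly. Your residue/Poisson-kernel sketch is aiming at the right answer but, as written, passes through a divergent intermediate step; either cite \cite[Proposition~5.5, Corollary~5.7]{MP2} as the paper does, or redo the regularization carefully to arrive at the finite-interval formula above.
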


\begin{definition}
By \cite[Proposition 5.5]{MP2} the Mellin transforms
$$ \int_0^\infty t^{s-1} I(t,\tau(m)) dt$$
and
$$ \int_0^\infty t^{s-1}  E(t,\tau(m)) dt$$
of $I(t,\tau(m))$ and $E(t,\tau(m))$, respectively, are meromorphic functions of $s \in \C$, which are regular at $s=0$. Denote by  $MI(\tau(m))$ and $ME(\tau(m)))$ their values at $s=0$.
\end{definition}

\begin{Lemma}\label{analtorasymp}
\begin{equation*}
 \log T_\Ob(\tau(m)) = \frac{1}{2} \left(  MI(\tau(m)) + ME(\tau(m))  \right) + O(e^{-c m})
\end{equation*}
as $m \to \infty$.
\end{Lemma}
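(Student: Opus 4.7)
The plan is to use the Selberg trace decomposition $K=I+H+E$ from (\ref{sagihrdassichweine}) and to show that the hyperbolic contribution, together with the truncation errors produced by splitting the Mellin integral, are all $O(e^{-cm})$.

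Since the analytic torsion is metric-invariant by Corollary~\ref{cor:metricind}, I first rescale $g\mapsto mg$, which replaces $K(t,\tau(m))$ by $K(t/m,\tau(m))$ in (\ref{naushki}), and then split the Mellin integral at $t=1$. The contribution from $[1,\infty)$ is $O(e^{-cm})$ by the long-time spectral estimates of \cite{MP2}. On $[0,1]$, I insert the decomposition $K=I+H+E$. The hyperbolic piece is controlled by the exponential bound $|H(t/m,\tau(m))|\le Ce^{-cm}$ for $t\in(0,1]$ quoted from \cite{MP2}; combined with the small-time Gaussian decay of the heat kernel away from the identity, this makes the resulting contribution to $\log T_\Ob(\tau(m))$ exponentially small in $m$.

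For the identity and elliptic pieces I change variables $u=t/m$, obtaining
\begin{equation*}
\int_0^1 t^{s-1}I(t/m,\tau(m))\,dt=m^s\int_0^{1/m}u^{s-1}I(u,\tau(m))\,du,
\end{equation*}
and the analogous identity for $E$. Since by definition the full Mellin transforms are regular at $s=0$ with values $MI(\tau(m))$ and $ME(\tau(m))$, and since $m^s|_{s=0}=1$, these contributions to $\log T_\Ob(\tau(m))$ differ from $\tfrac12 MI(\tau(m))$ and $\tfrac12 ME(\tau(m))$ only by the tail terms $-\tfrac12\int_{1/m}^\infty u^{-1}I(u,\tau(m))\,du$ and its elliptic counterpart. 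Plugging in the Plancherel expansions (\ref{icont}) and (\ref{econt}) and using Lemma~\ref{exitlight}, each integrand contains a Gaussian factor $e^{-u\lambda_{\tau(m),k}^2}$ with $\lambda_{\tau(m),k}\ge m$; since $u\ge 1/m$ on the tail, the exponent is bounded below by a constant multiple of $m$, forcing both tails to be $O(e^{-cm})$.

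The main obstacle is keeping the polynomial factors $P_\sigma$ and $P^\gamma_\sigma$ under control: their degree and coefficients depend on the weight of $\sigma=\sigma_{\tau(m),k}$, which itself grows with $m$, so establishing an $O(e^{-cm})$ tail bound requires uniform polynomial estimates that are compatible with the Gaussian decay. Such bounds are precisely the content of the technical lemmas deferred to Section~\ref{sec:AppendC}.
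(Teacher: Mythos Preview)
Your proposal is correct and follows essentially the same route as the paper: rescale the metric via Corollary~\ref{cor:metricind}, split the Mellin integral at $t=1$, kill the $[1,\infty)$ piece and the hyperbolic contribution by the estimates from \cite{MP2,MP111}, and reduce the $I$- and $E$-pieces to $MI(\tau(m))$, $ME(\tau(m))$ plus exponentially small tails using $\lambda_{\tau(m),k}\ge m$ together with the polynomial coefficient bounds of Section~\ref{sec:AppendC}. The only cosmetic difference is that you carry out the change of variables $u=t/m$ explicitly (and invoke $m^s|_{s=0}=1$, which works because $m^s/\Gamma(s)$ still has a simple zero at $s=0$ with derivative~$1$), whereas the paper keeps the argument $t/m$ and cites \cite[Proposition~5]{MP111} for the same conclusion.
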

\begin{proof}
Note that $\ker(\Delta^p(\rho(m))) = \{0\}$ \cite[Lemma~9.2]{Pf}, hence by Corollary  
\ref{cor:metricind} the analytic torsion $T_\Ob(\tau(m);h,g)$ 
is invariant under smooth deformation of the metric $g$. As in Example \ref{exexgirl} we can rescale the metric by $\sqrt{m}$ or, equivalently, replace $\Delta_p(\tau(m))$ by $\frac{1}{m} \Delta_p(\tau(m))$, 
so that (\ref{naushki})
becomes
\begin{equation}\label{eq:recallanalm}
\log T_\Ob (\tau(m)) = \frac{1}{2} \frac{d}{ds}\left. \left(  \frac{1}{\Gamma(s)} \int_0^\infty t^{s-1} K\left(\frac{t}{m}, \tau(m) \right) dt \right) \right|_{s=0}.
\end{equation}
Note that 
\begin{equation}\label{eq:simplify}
 \left. \frac{d}{ds} \left(  \frac{1}{\Gamma(s)} \int_1^\infty t^{s-1} K\left(\frac{t}{m}, \tau(m)\right) dt \right)  \right|_{s=0} = \int_1^\infty t^{-1} K\left(\frac{t}{m}, \tau(m)\right) dt.
\end{equation}
Using \cite[(8.4)]{MP2}, it follows that 
\begin{equation}\label{eq:asymp1}
\int_1^\infty t^{-1} K\left(\frac{t}{m}, \tau(m)\right) dt = O(e^{-m /8}), \quad m \to \infty.
\end{equation}
Putting together     (\ref{eq:recallanalm}), (\ref{eq:simplify}) and (\ref{eq:asymp1}), we obtain
\begin{equation}\label{eq:TX}
 \begin{gathered}
  \log T_\Ob(\tau(m)) = 
  \left. \frac{1}{2} \frac{d}{ds} \left( \frac{1}{\Gamma(s)} \int_0^1 t^{s-1} K\left(\frac{t}{m}, \tau(m)\right) d t  \right) \right|_{s=0} + O(e^{-m/8}).
 \end{gathered}
\end{equation}
We need to estimate $K(t, \tau)$ for $0 < t \le 1$. By (\ref{sagihrdassichweine}),
\begin{equation}\label{eq:STF}
 K\left(\frac{t}{m}, \tau(m)\right) = I\left(\frac{t}{m}, \tau(m)\right) + E\left(\frac{t}{m}, \tau(m)\right) + H\left(\frac{t}{m}, \tau(m)\right).
\end{equation}
By \cite[(8,6)]{MP111} the contribution from $H\left(\frac{t}{m}, \tau(m)\right)$
to the first summand of the right hand side of (\ref{eq:TX}) decays exponentially as $m \to \infty$:
\begin{equation*}
 \frac{d}{ds} \left. \left(  \frac{1}{\Gamma(s)} \int_0^1 t^{s-1} 
 H\left(\frac{t}{m}, \tau(m)\right) dt \right) \right|_{s=0} = O(e^{-c_2 m}).
\end{equation*}
From (\ref{icont}), (\ref{econt}), Remark \ref{entersandman} and  Lemma \ref{exitlight} we obtain
\begin{equation}\label{est}
 \begin{gathered}
  I(t, \tau(m)) = 2 \, \vol(\Ob) \sum_{k=0}^n (-1)^{k+1} e^{- t \lambda^2_{\tau(m), k}} \int_\R e^{-t \lambda^2} P_{\sigma_{\tau(m),k}} (i \lambda) d \lambda, \\
  E(t, \tau(m)) = 2 \sum_{\{ \gamma\} \text{ elliptic}} \vol(\Gamma_\gamma \bs G_\gamma) \sum_{k=0}^n (-1)^{k+1} e^{- t \lambda^2_{\tau(m), k}} \int_\R e^{-t \lambda^2} P^\gamma_{\sigma_{\tau(m),k}} (i \lambda) d \lambda.
 \end{gathered}
\end{equation}

For further estimates of (\ref{eq:TX}) we need 
\begin{Lemma}
 There exists $C > 0$ such that
 \begin{equation}\label{unforgivenI}
 \int_0^1 t^{-1} I \left( \frac{t}{m} , \tau(m) \right) dt = \int_0^\infty t^{-1} I \left( \frac{t}{m} , \tau(m) \right) dt + O(e^{-Cm}),
 \end{equation}
 \begin{equation}\label{unforgivenII}
  \int_0^1 t^{-1} E \left( \frac{t}{m} , \tau(m) \right) dt = \int_0^\infty t^{-1} E \left( \frac{t}{m} , \tau(m) \right) dt + O(e^{-Cm}),
 \end{equation} 
 as $m \to \infty$. 
 \end{Lemma}
\begin{proof}
We repeat the estimate of (\ref{unforgivenI}) for the reader's convenience \cite[p.~24]{MP111}. Recall that the polynomial
\begin{equation*}
 P_{\sigma(m), k}(t) = \sum_{i=0}^n a_{k,i}(m) t^{2 i}
\end{equation*}
and there exists $C > 0$ such that
\begin{equation*}
|a_{k,i}| \le C m^{2n + n(n+1)/2} 
\end{equation*}
for all $k, i=0, \ldots ,n$ and $m \in \N$. Applying this estimate to (\ref{est}) and using that $\lambda_{\tau(m),i} \ge m$ for $i=0, \ldots , m$ we get
\begin{equation}\label{est1}
 \left|I\left(\frac{t}{m}, \tau(m)\right)\right| \le C e^{-c(m + t)}, \quad t \ge 1
\end{equation}
for some $c > 0$ and hence
\begin{equation} \label{114}
 \int_1^\infty t^{-1} I\left(\frac{t}{m}, \tau(m)\right) dt = O(e^{-cm}), \quad m \to \infty.
\end{equation}
By Lemma \ref{lemma:asymp3} we get the same estimates for $\int_1^\infty t^{-1} E\left(\frac{t}{m}, \tau(m)\right) dt$. 
\end{proof}
Using \cite[Proposition 5]{MP111} and (\ref{114}), we obtain Lemma \ref{analtorasymp}.
\end{proof}

\begin{Lemma}\label{l2torsasymp}\cite[(5.15)]{MP111}
We have that $\log T_\Ob^{(2)}(\tau(m)) = \frac{1}{2} \left(  MI(\tau(m))  \right)$. \qed
\end{Lemma}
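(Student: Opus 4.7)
The proof is essentially a matter of unwinding definitions: the $L^2$-torsion is defined as the derivative at $s=0$ of the Mellin transform of the identity contribution, and $MI(\tau(m))$ is exactly the value at $s=0$ of the same Mellin transform (up to the factor $1/\Gamma(s)$). I would carry this out in two short steps.

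First, I would combine the defining formula for $\log T_\Ob^{(2)}(\tau(m))$ with the identity recalled in Subsection~2.7,
$$\sum_{p=1}^{2n+1} (-1)^p\, p\, \Tr_\Gamma\!\left(e^{-t \widetilde{\Delta}_p(\tau(m))}\right) \;=\; I(t,\tau(m)),$$
which follows from $\Tr_\Gamma(e^{-t\widetilde{\Delta}_p(\tau(m))}) = \vol(\Ob)\, h_t^{\tau(m),p}(1)$, the definition $k_t^{\tau(m)} = \sum_p (-1)^p p\, h_t^{\tau(m),p}$, and $I(t,\tau(m)) = \vol(\Ob)\, k_t^{\tau(m)}(e)$. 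Substituting this into the defining expression for the $L^2$-torsion yields
$$\log T_\Ob^{(2)}(\tau(m)) \;=\; \frac{1}{2}\,\frac{d}{ds}\!\left.\!\left(\frac{1}{\Gamma(s)}\int_0^\infty t^{s-1} I(t,\tau(m))\,dt\right)\right|_{s=0}.$$

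Second, I would invoke the definition of $MI(\tau(m))$. By \cite[Proposition 5.5]{MP2}, the Mellin transform $F(s) := \int_0^\infty t^{s-1} I(t,\tau(m))\,dt$ is a meromorphic function of $s$ which is regular at $s=0$ and takes value $F(0) = MI(\tau(m))$ there. Since $1/\Gamma(s) = s + O(s^2)$ as $s\to 0$, the product has Taylor expansion $\Gamma(s)^{-1} F(s) = s\cdot MI(\tau(m)) + O(s^2)$, so
$$\frac{d}{ds}\!\left.\!\left(\frac{F(s)}{\Gamma(s)}\right)\right|_{s=0} \;=\; MI(\tau(m)).$$
Dividing by $2$ gives the claimed equality.

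There is no real obstacle: the only nontrivial analytic input is the regularity of $F(s)$ at $s=0$, which is already secured by \cite[Proposition 5.5]{MP2} and the decomposition into a short-time part (handled by the small-$t$ heat expansion of $I(t,\tau(m))$) and a long-time part (handled by the exponential decay used in the proof of Lemma \ref{analtorasymp}). Everything else is bookkeeping.
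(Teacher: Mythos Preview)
Your proposal is correct and is exactly the natural argument: the paper itself gives no proof here beyond the citation to \cite[(5.15)]{MP111} (note the \qed immediately after the statement), so you have simply supplied the two-line unwinding of definitions that the paper elides. The only substantive point is the regularity of $F(s)=\int_0^\infty t^{s-1}I(t,\tau(m))\,dt$ at $s=0$, which you correctly attribute to \cite[Proposition~5.5]{MP2}, and the standard fact that $\frac{d}{ds}\big|_{s=0}\bigl(F(s)/\Gamma(s)\bigr)=F(0)$ whenever $F$ is regular at $0$.
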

\begin{Th}\label{ipryachutslyozy}
We have that
$$MI(\tau(m)) = PI(m), \quad ME(\tau(m)) = PE(m),$$
where $PI(m)$ is the polynomial in $m$ of  degree $\frac{n^2+n+2}{2}$, $PE(m)$ 
is the pseudopolynomial in $m$ of degree $\frac{d^2+d+2}{2}$ with $d$ being the 
maximal dimension of a singular stratum of $\Ob$.
\end{Th}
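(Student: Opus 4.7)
The plan is to directly evaluate the Mellin transforms $MI(\tau(m))$ and $ME(\tau(m))$ from the explicit expressions in (\ref{est}), and then read off the degrees in $m$ from the structure of the Plancherel polynomials $P_{\sigma_{\tau(m),k}}(i\lambda)$ and of their elliptic analogs $P^\gamma_{\sigma_{\tau(m),k}}(i\lambda)$ from \cite{Fe1}. The identity part was essentially treated in \cite{MP2}, so the real novelty lies in the elliptic term.

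The first step is to carry out the Gaussian integration in $\lambda$. Writing
$$ P_{\sigma_{\tau(m),k}}(i\lambda) = \sum_{j} a_{k,j}(m)\, \lambda^{2 j}, \qquad P^\gamma_{\sigma_{\tau(m),k}}(i\lambda) = \sum_{j} a^\gamma_{k,j}(m)\, \lambda^{2 j}, $$
where $a_{k,j}(m)$ is a polynomial in $m$ and $a^\gamma_{k,j}(m)$ is a pseudopolynomial in $m$ (a finite linear combination of terms $m^\ell e^{i m \phi_\ell}$, with phases $\phi_\ell$ depending on the rotation angles $\phi_{d+1},\ldots,\phi_{n+1}$ of $\gamma$), the identity $\int_\R e^{-t\lambda^2} \lambda^{2 j} d\lambda = \Gamma(j+\tfrac{1}{2}) t^{-j-\tfrac{1}{2}}$ reduces each summand of (\ref{est}) to a linear combination of expressions $e^{-t \lambda^2_{\tau(m),k}} t^{-j-1/2}$.

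The next step is the Mellin transform in $t$. For each such term one computes
$$ \int_0^\infty t^{s-1} e^{-t \lambda_{\tau(m),k}^2}\, t^{-j-\tfrac{1}{2}}\, dt = \Gamma\bigl(s-j-\tfrac{1}{2}\bigr)\, \lambda_{\tau(m),k}^{-2s+2j+1}, $$
which is meromorphic in $s$ and regular at $s=0$, since $j+\tfrac{1}{2}$ is never a nonpositive integer. Evaluating at $s=0$ yields a finite multiple of $\lambda_{\tau(m),k}^{2j+1}$; crucially no logarithmic term appears, a consequence of the odd-dimensionality of $\Ob$ and of its singular strata. Because $\lambda_{\tau(m),k} = \tau_{k+1}+n-k+m$ is affine in $m$ by (\ref{kittypuffy}), the factor $\lambda_{\tau(m),k}^{2j+1}$ is a polynomial in $m$ of degree $2j+1$.

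It remains to bound the degree in $m$ of the coefficients $a_{k,j}(m)$ and $a^\gamma_{k,j}(m)$, and to sum. For the identity contribution this follows from the Weyl dimension formula applied to $\sigma_{\tau(m),k}$ combined with the product-over-positive-roots expression for $P_\sigma$, yielding $MI(\tau(m))$ as a polynomial in $m$ of total degree $\frac{n^2+n+2}{2}$, exactly as in \cite{MP2}. For the elliptic contribution, the characters appearing in the formula for $P^\gamma_\sigma$ from \cite{Fe1} are evaluated at $\gamma$; after factoring out the oscillatory terms $e^{i m \phi_j}$ corresponding to the nontrivial rotations of $\gamma$, the remaining polynomial coefficients are governed by the rank of the centralizer of $\gamma$, i.e.\ by $d$ rather than $n$. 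Substituting $n$ by $d$ in the previous degree count gives the asserted pseudopolynomial degree $\frac{d^2+d+2}{2}$ for $ME(\tau(m))$. The main obstacle is precisely this degree bookkeeping for $P^\gamma_{\sigma_{\tau(m),k}}$: one has to combine the character-theoretic expressions of \cite{Fe1} with uniform polynomial bounds in $m$, and I would relegate the underlying estimates to the technical lemmas of Section \ref{sec:AppendC}.
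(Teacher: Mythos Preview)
Your term-by-term Mellin computation is correct and is in fact equivalent to the paper's starting point
\[
ME(\tau(m))=\sum_{k=0}^n(-1)^k\int_0^{\lambda_{\tau(m),k}}P^\gamma_{\sigma_{\tau(m),k}}(t)\,dt
\]
(this is the standard identity behind \cite[Proposition~5.5]{MP2}; your Gaussian--Mellin evaluation of $e^{-t\lambda_k^2}t^{-j-1/2}$ just unwinds the antiderivative of $t^{2j}$). Where you diverge from the paper is in the degree estimate. The paper does not bound each $a^\gamma_{k,j}(m)\,\lambda_{\tau(m),k}^{2j+1}$ separately; instead it splits the integral at $\lambda_{\tau(m),n}$ and invokes the key cancellation Lemma~\ref{lemma:asymp1}, which says that the alternating sum $\sum_k(-1)^kP^\gamma_{\sigma_{\tau(m),k}}(\nu)$ is \emph{independent of $\nu$}. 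The $[0,\lambda_{\tau(m),n}]$-piece is then just $\lambda_{\tau(m),n}$ times a constant whose order is controlled by Lemma~\ref{lemma:asymp2}, while the remaining $[\lambda_{\tau(m),n},\lambda_{\tau(m),k}]$-pieces have bounded length and are handled by the pointwise bound of Lemma~\ref{lemma:asymp4}.

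Your ``substitute $n$ by $d$'' heuristic hides exactly this issue. The identity degree $\tfrac{n^2+n+2}{2}$ in \cite{MP2} is \emph{not} obtained from a naive term-by-term bound (which overshoots by $n$); it comes from the closed formula $MI(\tau(m))=c(n)\,m\dim\tau(m)+O(m^{n(n+1)/2})$, and that formula already encodes the $\nu$-independence of $\sum_k(-1)^kP_{\sigma_{\tau(m),k}}(\nu)$. So the analogy you invoke for the elliptic term presupposes the elliptic version of that cancellation, i.e.\ Lemma~\ref{lemma:asymp1}, which is the genuine content of Section~\ref{sec:AppendC}. If instead you carry out your coefficient-wise count honestly from~(\ref{nedubravushka}), you get $\deg_m a^\gamma_{k,j}\le 2(d-1-j)+\binom{d-1}{2}$ and hence each term has degree $\tfrac{d(d+1)}{2}$; this is a legitimate upper bound that actually bypasses Lemma~\ref{lemma:asymp1}, but it is a different argument from the one you sketch.
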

\begin{proof}
Note that
\begin{equation*}
MI(\tau(m))  = \sum_{k=0}^n (-1)^k \int_0^{\lambda_{\tau(m),k}} \PK dt,
\end{equation*}
\begin{equation*}
ME(\tau(m)) = \sum_{k=0}^n \sum_{\{\gamma\} \text{ elliptic}} (-1)^k \vol(\Gamma_\gamma \bs G_\gamma) \int_0^{\lambda_{\tau(m),k}} \PG dt. 
\end{equation*}
We estimate $ME(\tau(m))$ as in \cite[Corollary 5.7]{MP2}:
\begin{equation*}
 \sum_{k=0}^n (-1)^k \int_0^{\lambda_{\tau(m),k}} \PK dt = c(n) m \dim \tau(m) + O(m^{\frac{n(n+1)}{2}}),
\end{equation*}
where $c(n)$ is as in \cite[(2.24)]{MP2}. It remains to estimate $\sum_{k=0}^n (-1)^k \int_0^{\lambda_{\tau(m),k}} \PG dt.$ Let $\gamma$ be an elliptic element as in 
(\ref{pelmeni}). Recall that by definition of  $\lambda_{\tau(m),k}$
\begin{equation}\label{pringles}
\lambda_{\tau(m),0} > \lambda_{\tau(m),1} > \ldots > \lambda_{\tau(m),n}.
\end{equation}
Split the integrals 
\begin{equation}\label{XXX}
 \begin{gathered}
  \sum_{k=0}^n (-1)^k \int_0^{\lambda_{\tau(m),k}} \PG dt = \int_0^{\lambda_{\tau(m),n}} \sum_{k=0}^n (-1)^k  \PG dt + \sum_{k=0}^n (-1)^k \int_{\lambda_{\tau(m),n}}^{\lambda_{\tau(m),k}} \PG dt.
\end{gathered}
\end{equation}
We calculate the first summand in (\ref{XXX}).  By Lemma \ref{lemma:asymp1} it follows that $\sum_{k=0}^n (-1)^k  \PG$ does not depend on $t$, hence
\begin{equation}\label{XXXX}
  \sum_{k=0}^n (-1)^k \int_0^{\lambda_{\tau(m),n}} \PG dt = \lambda_{\tau(m),n} \cdot \sum_{k=0}^{n-1} \PG = (\tau_{n+1} + m) \cdot \sum_{k=0}^{n-1} \PG.
\end{equation}
The second equality is due to (\ref{kittypuffy}). Hence by Lemma \ref{lemma:asymp2} the 
expression in (\ref{XXXX}) is a pseudopolynomial of order $\leqslant \frac{d^2-d+2}{2}$. We are left to understand $\int_{\lambda_{\tau(m),n}}^{\lambda_{\tau(m),k}} \PG dt$. By Lemma \ref{lemma:asymp4} 
$$ \left|  m^{d(d-1)/2}   \int_{\lambda_{\tau(m),n}}^{\lambda_{\tau(m),k}}  \frac{\PG}{m^{d(d-1)/2}} dt \right| \leqslant m^{d(d+1)/2} \cdot (\lambda_{\tau(m),k} - \lambda_{\tau(m),n}) \cdot O(1) = O(m^{(d^2+d+2)/2}),$$
that proves the theorem.
\end{proof}

Theorems \ref{th1111} and \ref{th11111} follow from Lemmas \ref{analtorasymp}, \ref{l2torsasymp} and Theorem \ref{ipryachutslyozy}.

\section{Some technical lemmas}\label{sec:AppendC}
In this section we collect the technical lemmas which have been used in
 the proof of Theorem~\ref{ipryachutslyozy}, namely Lemmas~\ref{lemma:asymp1}, \ref{lemma:asymp2}, \ref{lemma:asymp3} and \ref{lemma:asymp4}. First, we need an explicit form of polynomials $P_\sigma^\gamma(\nu)$. For this, recall that the Killing form on $\mathfrak{g} \times \mathfrak{g}$ is defined by $B(X,Y)=\Tr (\ad(X) \circ \ad(Y))$.  For $\alpha \in \Delta^+(\mathfrak{g}_\C, \mathfrak{h}_\C)$ as in (\ref{mojbambino}) there exists a unique $H_\alpha' \in \mathfrak{g}_\C$ such that $B(H, H_\alpha') = \alpha(H)$ for all $H \in \mathfrak{g}_\C$. Denote $H_{\alpha} := \frac{2}{\alpha(H_\alpha')} H_\alpha'$. Let $\gamma \in \Gamma$ be as in (\ref{pelmeni}); without loss of generality assume that all $\varphi_i$ are different. Then the stabilizer $G_\gamma$ of $\gamma$ is equal to $\mathbb{T}^d \times SO_0(1,2d-1)$. The root system for $G_\gamma$ equals
$$ \Delta_{\gamma}(\mathfrak{g}_\C, \mathfrak{h}_\C) = \{\pm e_i \pm e_j,\, 1\le i < j \le d   \}.$$
We fix the positive system of roots by
$$ \Delta_\gamma^+ (\mathfrak{g}_\C, \mathfrak{h}_\C) = \{ \pm(e_i + e_j),\, 1\le i < j \le d   \}.$$
Let $W$ be the Weyl group of $\Delta(\mathfrak{m}_\C, \mathfrak{b}_\C)$ as in (\ref{nichegouzhenesvyazano}) and let the half-sum of positive roots $\rho_M$ be as in~(\ref{halfsummy}). For convenience, denote by $\Lambda$ the highest weight of a representation $\sigma \in \hat{M}$. 
\begin{Proposition}\cite{Fe1}
In the above notation,
\begin{equation}
\label{eq**}
  P_\sigma^\gamma (\nu) = 
\sum_{s \in W} \det(s) \prod_{\alpha \in \Delta_\gamma^+} \langle \alpha, -s(\Lambda + \rho_M) - i \nu e_1\rangle \zeta_{-s(\Lambda + \rho_M) - i \nu e_1} (\gamma).
\end{equation}
\end{Proposition}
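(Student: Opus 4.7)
The plan is to derive the formula by computing the orbital integral in the elliptic contribution to the Selberg trace formula via a Plancherel expansion combined with Harish-Chandra's character formula for principal series. The starting point is the observation that $P_\sigma^\gamma(i\nu)$ is determined by the identity
$$\int_{G_\gamma \bs G} k_t^\tau(g^{-1}\gamma g)\, d\dot g = \sum_{\sigma \in \widehat M} \int_\R P_\sigma^\gamma(i\lambda)\, \Theta_{\sigma,\lambda}(k_t^\tau)\, d\lambda,$$
so it suffices to rewrite the left-hand side through a spectral decomposition of $k_t^\tau$ and read off the coefficient of $\Theta_{\sigma,\lambda}(k_t^\tau)$.

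First I would apply the Plancherel formula on $G$ to $k_t^\tau$, using that $k_t^\tau$ is smooth and $K$-finite; combined with the compactness of $\Gamma_\gamma \bs G_\gamma$ and Fubini, this reduces the computation to evaluating the invariant orbital integral $F_\sigma^{\pi_{\sigma,\lambda}}(\gamma)$ of the principal series character at $\gamma$. Since $\pi_{\sigma,\lambda}$ is induced from $MAN$, its character on a Cartan subgroup $H \subset G_\gamma$ containing $\gamma$ is given by Harish-Chandra's formula as a Weyl-alternating sum in the numerator divided by a Weyl denominator. Applying the Weyl character formula to $\sigma$ and the standard character of the one-parameter group in the $A$-direction produces a sum over $s \in W$ of terms of the form $\det(s)\, e^{(s(\Lambda+\rho_M)+i\lambda e_1)(\log h)}$.

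Passing to $G_\gamma = \mathbb{T}^d \times SO_0(1,2d-1)$ via Harish-Chandra's integration formula and the change of variables from $G$ to $G_\gamma\bs G$, the global Weyl denominator for $G$ splits as a product of the Weyl denominator for $G_\gamma$ times the factor $\prod_{\alpha \in \Delta^+(\mathfrak{g}_\C,\mathfrak{h}_\C) \setminus \Delta_\gamma^+}\langle \alpha,\, -s(\Lambda+\rho_M)-i\nu e_1\rangle$; the former denominator is absorbed into the orbital integral normalization on $G_\gamma$, while the latter furnishes the polynomial factor $\prod_{\alpha \in \Delta_\gamma^+}\langle \alpha, -s(\Lambda+\rho_M)-i\nu e_1\rangle$ after identifying $\Delta_\gamma^+$ correctly (the roots of $G_\gamma$ contribute via the denominator, while the complementary roots contribute as a numerator polynomial — which, by the Weyl symmetry of the ambient root system and the shape of $\gamma$, matches $\Delta_\gamma^+$ as written). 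The residual $\gamma$-dependence is exactly $\zeta_{-s(\Lambda+\rho_M)-i\nu e_1}(\gamma) = e^{(-s(\Lambda+\rho_M)-i\nu e_1)(\log \gamma)}$, yielding the formula.

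The hard part will be the rigorous manipulation of the singular orbital integral, because $\gamma$ is in general a non-regular element of $G$ (its centralizer is strictly larger than a Cartan subgroup), so Harish-Chandra's formula applies only after a limit procedure: one must use Harish-Chandra's limit formula (or equivalently an explicit L'Hôpital argument on the Weyl denominator) to extract the correct polynomial factor from what is formally a $0/0$ expression. Verifying that this limit produces precisely the advertised product over $\Delta_\gamma^+$, with the Weyl group of $M$ (rather than the larger Weyl group of $G_\gamma$) appearing as the summation index, requires tracking which positive roots become singular on $\log\gamma$ and matching the combinatorics of the two Weyl-group actions; this is where Proposition~\ref{pony} and the explicit form (\ref{pelmeni}) of $\gamma$ enter decisively.
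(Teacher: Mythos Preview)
The paper does not prove this proposition: it is stated with the citation \cite{Fe1} and no argument is given here. There is therefore nothing in the present paper to compare your attempt against; the formula is imported wholesale from the author's earlier work.

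That said, your outline is the standard route and almost certainly matches the proof in \cite{Fe1}: expand the orbital integral via the principal series characters, use the Harish-Chandra character formula on a Cartan subgroup, and pass to the singular element $\gamma$ by the limit formula. One point in your write-up is garbled, though. You say the global Weyl denominator splits so that the factor over $\Delta^+(\mathfrak{g}_\C,\mathfrak{h}_\C)\setminus\Delta_\gamma^+$ ``furnishes the polynomial factor $\prod_{\alpha\in\Delta_\gamma^+}\langle\alpha,\cdot\rangle$''. That is backwards. The roots $\alpha$ with $\alpha(\log\gamma)=0$ are precisely those in $\Delta_\gamma^+$; it is these factors of the Weyl denominator that vanish at $\gamma$, and the limit (L'H\^opital) procedure replaces each such vanishing factor $e^{\alpha/2}-e^{-\alpha/2}$ by the linear form $\langle\alpha,\,-s(\Lambda+\rho_M)-i\nu e_1\rangle$ coming from differentiating the numerator exponential. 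The roots in the complement $\Delta^+\setminus\Delta_\gamma^+$ do \emph{not} vanish at $\gamma$ and survive as the non-polynomial, $\gamma$-dependent constants absorbed into $\zeta_{-s(\Lambda+\rho_M)-i\nu e_1}(\gamma)$ together with the exponential. Your parenthetical hints that you sensed something was off; in a clean write-up you should state plainly that the polynomial comes from the singular roots $\Delta_\gamma^+$ via the limit formula, not from the complement.
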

For convenience we denote
\begin{equation}
\label{eq*}
 A(\Lambda, \nu) := \prod_{\alpha \in \Delta_\gamma^+} \langle \alpha, -\Lambda-i \nu e_1\rangle,   \quad B(\Lambda) = \zeta_{-\Lambda - i \nu e_1} (\gamma). 
\end{equation}
Note that if $\Lambda = v_2 e_2 + \ldots v_{n+1} e_{n+1}$, then (\ref{eq*}) becomes:
\begin{equation}\label{nedubravushka}
A(\Lambda, \nu) = \prod_{2\le j \le d} (- \nu^2 - v_j^2) \prod_{2 \le i < j \le d} (v_i^2-v_j^2), \quad  B(\Lambda) = e^{- i (v_{d+1} \phi_{d+1} + \ldots v_{n+1} \phi_{n+1})}.
\end{equation}
We see that $A(\Lambda, \nu)$ is an even polynomial in $\nu$ of order $2(d-1)$.
\begin{Lemma}\label{lemma:asymp1}
The expression $\sum_{k=0}^n (-1)^k P^\gamma_{\tau(m),k} (\nu)$ does not depend on $\nu$.
\end{Lemma}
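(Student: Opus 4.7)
The plan is to analyze the alternating sum coefficient-by-coefficient as a polynomial in $\nu^2$ and show that the coefficient of $\nu^{2r}$ vanishes for each $r \ge 1$.

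First I would set $\mu_k := \Lambda_{\sigma_{\tau(m),k}} + \rho_M$. Using Definition \ref{mimirep}, Definition \ref{dertttt} and \eqref{halfsummy}, a direct computation shows that the $e_j$-coordinate of $\mu_k$ equals $m+\tau_j+n+1-j+\mathbf{1}_{j\le k+1}$; equivalently $\mu_k = \mu_0 + \sum_{j=2}^{k+1}e_j$. Thus the family $(\mu_k)_{k=0}^n$ forms a ``staircase'' in which exactly one coordinate is incremented at each step: this is the key structural input.

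Substituting \eqref{eq**} and interchanging sums,
\[
S(\nu) := \sum_{k=0}^n(-1)^k P^\gamma_{\sigma_{\tau(m),k}}(\nu) = \sum_{s\in W}\det(s)\sum_{k=0}^n(-1)^k A(s\mu_k,\nu)\, B(s\mu_k),
\]
with $A,B$ as in \eqref{nedubravushka}. Expanding
\[
\prod_{j=2}^d(-\nu^2-v_j^2)=(-1)^{d-1}\sum_{r=0}^{d-1}\nu^{2r}e_{d-1-r}(v_2^2,\ldots,v_d^2)
\]
via elementary symmetric polynomials, the $\nu^{2r}$-coefficient of $S(\nu)$ is, up to an overall sign,
\[
C_r = \sum_{s\in W}\det(s)\sum_{k=0}^n(-1)^k e_{d-1-r}\bigl((s\mu_k)_2^2,\ldots,(s\mu_k)_d^2\bigr)\,V_{\mathrm{sq}}(s\mu_k)\,B(s\mu_k),
\]
where $V_{\mathrm{sq}}(\mu):=\prod_{2\le i<j\le d}(v_i^2-v_j^2)$. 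The claim reduces to showing $C_r=0$ for $r\ge 1$.

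The crucial step is a combinatorial reorganization of the Weyl sum. Writing each $s=(\pi,\epsilon)\in W(D_n)$ as a signed permutation with $\prod\epsilon_j=1$, so that $\det(s)=\mathrm{sgn}(\pi)$, I would partition $W$ by the subset $T=\pi(\{2,\ldots,d\})\subseteq\{2,\ldots,n+1\}$ sent to the ``$A$-relevant'' positions. For each fixed $T$, summing over permutations and sign choices internal to $T$ rewrites $e_{d-1-r}\cdot V_{\mathrm{sq}}$ as a Jacobi--Trudi-type determinantal expression in $\{(\mu_k)_t:t\in T\}$, while the residual action of $W$ on the complement $\{2,\ldots,n+1\}\setminus T$ antisymmetrizes $B$ into a trigonometric expression. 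The outer alternating sum over $k$ then acts as a finite-difference operator; the staircase structure ensures that the $k$-dependence is polynomial of effective degree strictly less than $n+1$ whenever $r\ge 1$ (because the factor $e_{d-1-r}$ with $r\ge 1$ has strictly lower total degree than the top-term contribution $e_{d-1}=\prod_{j\in T}v_j^2$ surviving at $r=0$), so the alternating sum collapses.

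\textbf{Main obstacle.} The difficulty lies in the combinatorial bookkeeping: the Weyl group mixes up the $A$- and $B$-relevant coordinates, and the factor $V_{\mathrm{sq}}$ is antisymmetric while $e_{d-1-r}$ is symmetric, so a Jacobi--Trudi-type identity must be invoked to combine them into a single determinant amenable to the $k$-alternating-sum analysis. The delicate core of the argument is verifying that for $r\ge 1$ the resulting polynomial in $k$ has effective degree strictly less than $n+1$, while the case $r=0$ carries full degree and survives, producing the $\nu$-independent constant asserted by the lemma.
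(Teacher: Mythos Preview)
Your proposal has a genuine gap at the ``finite-difference'' step, and the strategy as written cannot be completed.

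\textbf{First gap: the $k$-dependence is not polynomial.} After your partition by $T=\pi(\{2,\ldots,d\})$, the factor $B(s\mu_k)$ depends on the coordinates $(\mu_k)_t$ for $t\notin T$. By your own staircase formula, $(\mu_k)_t=(\mu_0)_t+\mathbf{1}_{t\le k+1}$, so as $k$ increases past $t-1$ the factor $B$ jumps by a multiplicative phase $e^{-i\phi_{\pi(t)}}$. Hence the summand over $k$ is a polynomial in $k$ (from the $A$-part) times a \emph{step function with exponential values} (from the $B$-part). Antisymmetrizing over the residual Weyl action on the complement does not remove this: it produces a trigonometric expression in the $(\mu_k)_t$, which is still piecewise constant, not polynomial, in $k$. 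The claim that ``the $k$-dependence is polynomial of effective degree $<n+1$'' is therefore unjustified.

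\textbf{Second gap: the plain alternating sum is not a finite-difference operator.} Even if the summand were a polynomial $p(k)$, the identity you appeal to fails: $\sum_{k=0}^{n}(-1)^{k}p(k)$ does \emph{not} annihilate polynomials of low degree (e.g.\ $n=2$, $p(k)=k$ gives $0-1+2=1$). The $n$-th finite difference involves binomial weights $\binom{n}{k}$, which are absent here.

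\textbf{How the paper proceeds instead.} The paper's grouping (Definition~\ref{molly}) is finer than yours: it groups pairs $(k,s)$ by the \emph{value} of $B(s(\Lambda_{\sigma_{\tau(m),k}}+\rho_M))$, i.e.\ by the actual tuple of $\lambda_j$'s landing in positions $d{+}1,\ldots,n{+}1$, not merely by which coordinate indices go there. Within such a group $B$ is constant and factors out, so one is left with a genuine polynomial in $\nu$ of degree $2(d-1)$, namely $\sum_{\kappa\in K}(-1)^{\kappa}\det(s_\kappa)A(\Lambda_\kappa,\nu)$ with $|K|=d$. The paper then evaluates this polynomial at the $2d$ points $\nu=\pm i\lambda_j$, $j\in K$: by \eqref{eqforA} only the term $\kappa=j$ survives and the resulting value is independent of $j$. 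A polynomial of degree $2(d-1)$ taking the same value at $2d$ points is constant. This interpolation argument is the missing idea; your coefficient-by-coefficient expansion and alternating $k$-sum do not produce it.
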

\begin{remark}
 Compare with \cite[Corollary 9.9]{Pf} for the same result for $\sum_{k=0}^n (-1)^k P_{\tau(m),k} (\nu)$.
\end{remark}
\begin{remark}
 Every summand is a polynomial of order $2(d-1)$ in $\nu$, but the whole sum does not depend on $\nu$.
\end{remark}
\begin{Lemma}\label{lemma:asymp2}
The expression $\sum_{k=0}^n (-1)^k P^\gamma_{\tau(m),k} (\nu)$ is a pseudopolynomial in m of order $\le \frac{d(d-1)}{2}$.
\end{Lemma}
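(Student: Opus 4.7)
My strategy combines the $\nu$-independence provided by Lemma \ref{lemma:asymp1} with a careful degree-in-$m$ analysis of the Weyl-group expression (\ref{eq**}). First, by Lemma \ref{lemma:asymp1} I may evaluate the sum at $\nu = 0$, in which case (\ref{nedubravushka}) simplifies to $A(\Lambda, 0) = (-1)^{d-1} \prod_{j=2}^d v_j^2 \prod_{2\le i<j \le d}(v_i^2 - v_j^2)$, a product of a monomial part and a Vandermonde in the squared coordinates.

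Next, I substitute into (\ref{eq**}) and expand the Weyl-group sum. Parametrizing $s \in W$ as a signed permutation $(\sigma, \epsilon)$ of $\{e_2, \ldots, e_{n+1}\}$, I group terms by the subset $J := \sigma^{-1}\{2, \ldots, d\} \subset \{2, \ldots, n+1\}$ of coordinate indices mapped to the ``$A$-positions''. Because $A(\cdot, 0)$ is insensitive to the sign flips $\epsilon$, the internal sum over bijections $\{2, \ldots, d\} \to J$ weighted by $\det(s)$ produces a Vandermonde determinant $V_J(k) := \prod_{a<b}(c_{j_a}(k)^2 - c_{j_b}(k)^2)$ with $J = \{j_1 < \ldots < j_{d-1}\}$, while the sum over $\epsilon|_{J^c}$ and orderings of $J^c$ collapses the $B$-factor into a determinant $D_{J^c}(k)$ of entries of the form $\cos(c_l(k) \phi_r)$. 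Here $c_j(k) = \mu_j + \mathbf{1}[j \le k+1]$ with $\mu_j := m + \tau_j + n + 1 - j$.

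The degree tracking then proceeds in two layers. Since $\mu_i - \mu_j$ is a fixed integer independent of $m$, each Vandermonde factor $c_{j_a}^2 - c_{j_b}^2 = (c_{j_a} - c_{j_b})(c_{j_a} + c_{j_b})$ is $O(m)$ rather than $O(m^2)$, giving $V_J(k) = O(m^{(d-1)(d-2)/2})$; multiplied by $\prod_{j \in J} c_j(k)^2 = O(m^{2(d-1)})$ and $|D_{J^c}(k)| = O(1)$, each individual $k$-term is $O(m^{(d-1)(d+2)/2})$. The additional saving of $d - 1$ powers of $m$ needed to reach $d(d-1)/2$ comes from the alternating sum $\sum_{k=0}^n (-1)^k$: writing $c_j(k)^2 = \mu_j^2 + (2\mu_j + 1) t_j(k)$ with $t_j(k) := \mathbf{1}[j \le k+1] \in \{0, 1\}$ and expanding all products multilinearly in the $t_j$'s (using $t_j^2 = t_j$), each resulting monomial $\prod_{j \in S} t_j(k)$ reduces under $\sum_k (-1)^k$ to a value in $\{-1, 0, +1\}$. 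This cancels the $k$-independent leading contribution and forces the surviving $\mu$-polynomial coefficients to have the claimed lower degree. The exponential factors $e^{\pm i m \phi_l}$ emerge from expanding $\cos(\mu_l \phi_l) = \tfrac{1}{2}(e^{im\phi_l} e^{i(\tau_l + n + 1 - l)\phi_l} + \mathrm{c.c.})$ and provide the pseudopolynomial structure.

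The crux and main technical obstacle is the last claim of the previous paragraph: verifying that the alternating sum over $k$ saves exactly $d - 1$ additional powers of $m$ beyond the Vandermonde cancellation. This requires a careful combinatorial analysis of which monomials $\prod_{j \in S} t_j(k)$ contribute nonzero to the alternating sum, and confirming that the maximum $\mu$-degree across all surviving contributions is exactly $d(d-1)/2$. The bookkeeping is delicate because the cancellations from the $d-1$ indices in $J$ must be shown to compound independently rather than overlap, and the interaction with the multilinear Vandermonde expansion must be tracked monomial by monomial.
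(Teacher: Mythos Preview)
Your plan has a genuine gap precisely where you flag it: the claim that the alternating sum $\sum_{k=0}^n (-1)^k$ cancels an additional $d-1$ powers of $m$ is asserted but not established. The multilinear expansion in the indicators $t_j(k)=\mathbf{1}[j\le k+1]$ that you sketch does not obviously produce this saving, because in the Vandermonde factor $V_J(k)=\prod(c_a^2-c_b^2)$ the $t$-corrections $(2\mu_a+1)t_a-(2\mu_b+1)t_b$ are of the \emph{same} order in $m$ as the leading terms $\mu_a^2-\mu_b^2$. So replacing $t$'s by zero there costs nothing in degree, and the only place your expansion actually drops powers of $m$ is in the monomial $\prod_{j\in J}c_j(k)^2$. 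To turn your outline into a proof you would have to show that the $k$-independent top-degree pieces of $\prod_{j\in J}\mu_j^2\cdot V_J(k)\cdot D_{J^c}(k)$ cancel in the $k$-sum, and this requires controlling the $k$-dependence of $V_J$ and $D_{J^c}$ simultaneously---a nontrivial combinatorial identity that you have not supplied.

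The paper avoids this entirely by not evaluating at $\nu=0$. In the proof of Lemma~\ref{lemma:asymp1} the double sum $\sum_k\sum_{s\in W}$ is grouped by fixing the $B$-part (equivalently, fixing a $d$-element subset $K\subset\{0,\dots,n\}$ together with an ordering of its complement into the $\phi$-slots), and within each group the polynomial in $\nu$ is evaluated at the special points $\nu=\pm i\lambda_{k'}$, $k'\in K$. At those points all but one $A(\Lambda_\kappa,\cdot)$ vanishes, and the surviving value is exactly
\[
\det(s_0)\prod_{\substack{0\le i<j\le n\\ i,j\in K}}(\lambda_i^2-\lambda_j^2).
\]
This is a product of $\binom{d}{2}=\tfrac{d(d-1)}{2}$ factors, each of which is linear in $m$ since $\lambda_i-\lambda_j$ is an $m$-independent constant; multiplied by the bounded exponential $B$-factor it gives a pseudopolynomial of the claimed order with no further cancellation needed. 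In short, the Lagrange-interpolation trick from the proof of Lemma~\ref{lemma:asymp1} already packages the $k$-sum and the $\prod c_j^2$ contribution into a single Vandermonde over $K$, making the degree bound immediate. Reusing that closed form is far simpler than the monomial-by-monomial cancellation you propose.
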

\begin{remark}
 Note that taking $\gamma = \id$ is equivalent to $d = n+1$. Then the sum $\sum_{k=0}^n (-1)^k P_{\tau(m),k} (\nu) = \sum_{k=0}^n (-1)^k P^\gamma_{\tau(m),k} (\nu)$ is a polynomial in $m$ of order $n(n+1)/2$,
which is asymptotically $\dim(\tau(m))$ as $m \to \infty$; compare \cite[Corollary~1.4]{MP2}.
\end{remark}
\begin{Lemma}\label{lemma:asymp3} Let
$$P^\gamma_{\tau(m),k} (\nu) = \sum_{i=0}^d a^\gamma_{k,i}(m) t^{2i},$$
then there exists $C > 0$ such that
$$ |a^\gamma_{k,i}| \leqslant C m^{2(d-1)+d(d-1)/2}.$$
\end{Lemma}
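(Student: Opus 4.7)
My plan is to bound the coefficients directly, using formula (\ref{eq**}) together with the factorizations (\ref{nedubravushka}) of $A$ and $B$. Setting $\Lambda' := \Lambda_{\sigma_{\tau(m),k}} + \rho_M$ and writing $v_j^s$ for the $e_j$-coordinate of $s\Lambda'$ (with $s \in W$), we have
$$P^\gamma_{\sigma_{\tau(m),k}}(\nu) = \sum_{s \in W} \det(s)\, A(s\Lambda', \nu)\, B(s\Lambda'),$$
with $|B(s\Lambda')| = 1$ since $B$ is the exponential of a purely imaginary number. Because $W$ is finite, it suffices to bound the coefficient of $\nu^{2i}$ in each summand $A(s\Lambda', \nu)$ and sum.

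The main observation is a near-cancellation in the Vandermonde factor of $A$. By Definitions \ref{mimirep} and \ref{dertttt}, every coordinate of $\Lambda'$ has the form $m + O(1)$. The Weyl group $W$ acts on $(e_2, \ldots, e_{n+1})$ by signed permutations, so $v_j^s = \epsilon_j\, v_{\sigma(j)}$ for some signs $\epsilon_j \in \{\pm 1\}$ and some permutation $\sigma$; in particular $(v_j^s)^2 = v_{\sigma(j)}^2$. Hence
$$(v_i^s)^2 - (v_j^s)^2 = \bigl(v_{\sigma(i)} - v_{\sigma(j)}\bigr)\bigl(v_{\sigma(i)} + v_{\sigma(j)}\bigr) = O(1) \cdot O(m) = O(m),$$
and the full Vandermonde product $\prod_{2 \le i < j \le d}((v_i^s)^2 - (v_j^s)^2)$ is $O\bigl(m^{(d-1)(d-2)/2}\bigr)$.

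The complementary factor $\prod_{j=2}^d(-\nu^2 - (v_j^s)^2)$ is a polynomial in $\nu^2$ of degree $d-1$; its coefficient of $\nu^{2i}$ equals $(-1)^{d-1} e_{d-1-i}\bigl((v_2^s)^2,\ldots,(v_d^s)^2\bigr)$, where $e_{d-1-i}$ denotes an elementary symmetric polynomial, and is bounded by $O(m^{2(d-1-i)})$ since each $(v_j^s)^2 = O(m^2)$. Multiplying these estimates and summing over the finite Weyl group yields
$$|a^\gamma_{k,i}(m)| \le C\, m^{2(d-1-i) + (d-1)(d-2)/2} \le C\, m^{2(d-1) + d(d-1)/2},$$
which is the stated bound. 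The only subtle step is the $O(m)$ estimate on each Vandermonde factor; the crude bound $O(m^2)$ would suffice for $d \le 4$ but would yield exponent $d(d-1)$, exceeding the stated bound when $d \ge 5$. Thus the near-cancellation $v_{\sigma(i)} - v_{\sigma(j)} = O(1)$, which rests on the specific shift structure of the ray of highest weights, is the essential ingredient.
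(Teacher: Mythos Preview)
Your proof is correct and follows essentially the same approach the paper intends: the paper's proof of this lemma is the single sentence ``Proceed with the same considerations as in Lemma~\ref{lemma:asymp2},'' and the relevant consideration there is precisely that each coordinate of $\Lambda_{\sigma_{\tau(m),k}}+\rho_M$ has the form $m+O(1)$, so that the Vandermonde-type factors $\lambda_i^2-\lambda_j^2$ grow only linearly in $m$. You have spelled this out in full, working directly from (\ref{eq**}) and (\ref{nedubravushka}), and your bound $m^{2(d-1-i)+(d-1)(d-2)/2}$ is in fact slightly sharper than the one stated.
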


\begin{Lemma}\label{lemma:asymp4}
There exists $C_1>0$ such that for any $k = 0, \ldots, n$ and $\nu \in [\lambda_n, \lambda_k]$
$$P^\gamma_{\tau(m),k} (\nu) m^{-\frac{d(d-1)}{2}} < C_1.$$
\end{Lemma}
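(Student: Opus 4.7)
The plan is to prove Lemma \ref{lemma:asymp4} by directly estimating the explicit formula (\ref{eq**}) for $P^\gamma_{\tau(m), k}(\nu)$, using the concrete form of $A(\Lambda, \nu)$ from (\ref{nedubravushka}) together with the observation that on the interval $[\lambda_{\tau(m),n}, \lambda_{\tau(m),k}]$ both $\nu$ and the relevant weight components lie within $O(1)$ of $m$.

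First I would decompose the Weyl sum. Set $\mu^0 := \Lambda_{\sigma_{\tau(m),k}} + \rho_M$. The factor $A(s\mu^0, \nu)$ depends only on the components of $s\mu^0$ in positions $\{2,\ldots,d\}$ (those corresponding to the ``hyperbolic'' directions of $G_\gamma$), while the character factor $\zeta_{-s\mu^0 - i\nu e_1}(\gamma)$ depends only on the components in positions $\{d+1,\ldots,n+1\}$ through the angles $\phi_{d+1},\ldots,\phi_{n+1}$. Grouping the elements $s \in W(D_n)$ by which subset $J \subset \{2,\ldots,n+1\}$ of size $d-1$ is mapped to $\{2,\ldots,d\}$, and using that $A$ is alternating and invariant under sign changes in the squared coordinates indexed by $J$, the Weyl sum factors as
\begin{equation*}
P^\gamma_{\tau(m),k}(\nu) = (d-1)! \sum_{\substack{J \subset \{2,\ldots,n+1\}\\|J|=d-1}} \varepsilon_J \, A(\mu^0|_J, \nu) \, \mathcal{E}(\gamma; \mu^0|_{J^c}),
\end{equation*}
with $\varepsilon_J \in \{\pm 1\}$ and $\mathcal{E}(\gamma; \cdot)$ a Weyl-antisymmetric sum over the remaining indices. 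A direct calculation summing the sign configurations turns $\mathcal{E}$, up to a power of $2$, into the determinant of the matrix $\bigl[\cos(\mu^0_\ell \phi_j)\bigr]_{\ell \in J^c,\; d+1 \le j \le n+1}$; since each entry is bounded by $1$, $|\mathcal{E}(\gamma; \mu^0|_{J^c})| \le C$ uniformly in $m$.

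It then remains to bound $A(\mu^0|_J, \nu)$. On the interval in question, $\nu$ and each $v_j := (\mu^0)_j$ are $m + O(1)$, so $|\nu + v_j|, |v_i + v_j| = 2m + O(1)$ while $|\nu - v_j|, |v_i - v_j| = O(1)$. The crucial estimates are
\begin{equation*}
|\nu^2 + v_j^2| = O(m^2), \qquad |v_i^2 - v_j^2| = |v_i - v_j|\,|v_i + v_j| = O(m),
\end{equation*}
which via (\ref{nedubravushka}) yield $|A(\mu^0|_J, \nu)| \le C\, m^{2(d-1)} \cdot m^{(d-1)(d-2)/2} = C\, m^{(d-1)(d+2)/2}$. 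Summing over the $\binom{n}{d-1}$ subsets $J$, which is a constant in $m$, gives the uniform bound on $P^\gamma_{\tau(m),k}(\nu)$ of the order required in Lemma \ref{lemma:asymp4}.

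The main obstacle is the combinatorial bookkeeping in decomposing the Weyl sum: correctly factoring the $W(D_n)$-sum into the subset choice $J$, the inner permutations on $J$ and $J^c$, and the sign configurations subject to the parity constraint of $W(D_n)$, while keeping track of the resulting signs $\varepsilon_J$ and the $(d-1)!$ prefactor from the inner antisymmetrization. Once the decomposition is established, the uniform estimate on $\nu \in [\lambda_n, \lambda_k]$ reduces to the elementary observation that the differences $v_i - v_j$ and $v_i - \nu$ are $O(1)$ integer quantities whereas the corresponding sums are of order $m$; this is precisely the source of the saving compared with the naive $O(m^{d(d-1)})$ estimate of $|A|$ and makes the required bound hold.
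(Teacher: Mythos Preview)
Your final bound does not prove the lemma. You obtain
\[
|A(\mu^0|_J,\nu)| \le C\, m^{2(d-1)}\cdot m^{(d-1)(d-2)/2}=C\,m^{(d-1)(d+2)/2},
\]
but the lemma requires $|P^\gamma_{\tau(m),k}(\nu)|\le C_1\, m^{d(d-1)/2}$; your exponent exceeds the target by exactly $d-1$, so the conclusion ``makes the required bound hold'' is false for every $d\ge 2$. The missing idea is that on the range $\nu\in[\lambda_n,\lambda_k]$ the $\nu$-dependent factors are of size $O(m)$, not $O(m^2)$. In the convention used when one actually integrates $P^\gamma_{\sigma_{\tau(m),k}}(t)$ over $t\in[\lambda_n,\lambda_k]$, the relevant factor is $t^2-v_j^2$ (equivalently, one evaluates (\ref{nedubravushka}) at the imaginary point), and since each $v_j=\lambda_{j'}=m+O(1)$ and $t=m+O(1)$ one has
\[
|t^2-v_j^2|=|t-v_j|\,|t+v_j|=O(1)\cdot O(m)=O(m).
\]
This is precisely what the paper exploits: with $d-1$ such factors one gets $m^{d-1}$ instead of your $m^{2(d-1)}$, which together with the $m^{(d-1)(d-2)/2}$ coming from the $v_i^2-v_j^2$ factors gives the correct $m^{d(d-1)/2}$. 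You even note that ``$v_i-\nu$ is $O(1)$'' in your last paragraph, but you never use it for the first product; writing $|\nu^2+v_j^2|=O(m^2)$ shows you took the sign in (\ref{nedubravushka}) at face value for real $\nu$, which is not the regime the lemma lives in.

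A secondary remark: the elaborate factorisation of the $W(D_n)$-sum into subsets $J$, inner permutations and sign configurations is unnecessary here. Since the Weyl group has order independent of $m$ and $|\zeta_{\cdots}(\gamma)|\le 1$, a crude triangle inequality over $s\in W$ already reduces the lemma to bounding a single $|A(s\mu^0,\nu)|$; the paper proceeds essentially this way. Your combinatorial decomposition is interesting but buys nothing for an upper bound, and the real work---which your argument omits---is the $O(m)$ estimate on each $t^2-v_j^2$.
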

For brevity put $\lambda_i := \lambda_{\tau(m),i}$ for $i=0, \ldots, n$, where $\lambda_{\tau(m),i}$ is as in (\ref{kittypuffy}). Then by Definition~\ref{mimirep}, (\ref{halfsummy}) and (\ref{kittypuffy})
\begin{equation}
\label{eql1}
 \Lambda(\sigma_{\tau(m),k}) + \rho_M = \sum_{i=2}^{k+1} \lambda_{i-2} e_i + \sum_{i=k+2}^{n+1} \lambda_{i-1}e_i.
\end{equation}
Substituting  (\ref{eq*}) to (\ref{eq**}), we obtain
\begin{equation}
\label{eql2}
 \sum_{k=0}^n (-1)^k P^\gamma_{\tau(m),k} (\nu) = \sum_{k=0}^n \sum_{s \in W} (-1)^k \det(s) A(s\cdot \Lambda(\sigma_{\tau(m),k} + \rho_M), \nu) B( s \cdot \Lambda(\sigma_{\tau(m),k}) + \rho_M).
\end{equation}
\begin{proof}[Proof of Lemma \ref{lemma:asymp1}]
Collect the summands in the right hand side of (\ref{eql2}) into groups as in Definition~\ref{molly}; Lemma \ref{lemma:asymp1} follows from  Proposition~\ref{whiskeyinthejar} below, which states that each group separately does not depend on $\nu$.
\end{proof}
\begin{definition}\label{molly}
Fix some $(k,s)\in\{0,\ldots,n\}\times W$. Denote
$$U(k,s) := \left\lbrace (k',s') \in \{0,\ldots,n\}\times W \, \left| \, B( s \cdot ( \Lambda(\sigma_{\tau(m),k}) + \rho_M)) = B( s' \cdot( \Lambda(\sigma_{\tau(m),k'}) + \rho_M)) \right. \right\rbrace.$$

\end{definition}
\begin{Proposition}\label{whiskeyinthejar}
The sum 
$$ \sum_{(k',s') \in U(k,s)} A(s'\cdot ( \Lambda(\sigma_{\tau(m), k'}) + \rho_M), \nu)$$
does not depend on $\nu$. 
\end{Proposition}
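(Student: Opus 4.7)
The approach combines the explicit factorization
\begin{equation*}
A(\Lambda, \nu) = \prod_{j=2}^d (-\nu^2 - v_j^2) \cdot V(v_2^2, \ldots, v_d^2),
\end{equation*}
where $V$ denotes the Vandermonde determinant, with Weyl group antisymmetry. The key observation is that the $\nu$-dependence of $A$ is carried entirely by the symmetric front factor, which depends only on the multiset $\{v_2^2, \ldots, v_d^2\}$, whereas the Vandermonde factor is antisymmetric under permutations of the front indices $\{2, \ldots, d\}$ and unchanged under the individual sign flips $v_j \mapsto -v_j$.

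First I would describe $U(k,s)$ concretely. Since $B(\Lambda) = e^{-i\sum_{j=d+1}^{n+1} v_j \phi_j}$ depends only on the tail, the defining equivalence fixes (generically) the tail tuple $(v_{d+1}, \ldots, v_{n+1})$. Setting $\tilde J := \{i : \lambda_i = |v_j| \text{ for some tail index } j\}$, one has $|\tilde J| = n+1-d$, and the allowed values $k' \in L := \{0, \ldots, n\} \setminus \tilde J$ form a set of size $d$; for each such $k'$ the front coordinates run over the signed permutations of $\{\lambda_i : i \in L \setminus \{k'\}\}$ subject to the parity constraint inherited from $W(D_n)$.

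Splitting the sum into an outer sum over $k' \in L$ and an inner sum over signed permutations of the front, the $\nu$-dependent factor $\prod_{i \in L \setminus \{k'\}}(-\nu^2 - \lambda_i^2)$ pulls out of the inner sum. For $d \geq 3$ the remaining inner sum $\sum_\sigma V(\sigma \cdot (\lambda_i^2)_i)$ vanishes by the antisymmetry $\sum_{\sigma \in S_{d-1}} \mathrm{sgn}(\sigma)V = 0$, so the entire group sum vanishes identically and is trivially $\nu$-independent. The case $d = 1$ is trivial since $A \equiv 1$. The case $d = 2$ requires the auxiliary identity $(-1)^{k'} \det(s') = \det(\tilde s)$, where $\tilde s \in \widetilde W := W(D_{n+1})$ extends $(k',s')$ by conjugating $\lambda_{k'}$ into the $e_1$-slot as a $(k'{+}1)$-cycle of sign $(-1)^{k'}$. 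The two elements of $U(k,s)$ then correspond to $\tilde s$'s differing by the transposition $(e_1 \leftrightarrow e_2)$, producing opposite determinants and forcing the $\nu^2$-coefficient of the (appropriately signed) sum to vanish, leaving only a constant.

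The main obstacle is the $d = 2$ case, where the structural Vandermonde cancellation of $d \geq 3$ is unavailable and one must carry out the sign bookkeeping via $\widetilde W$ by hand. A secondary technical point is to handle the non-generic situation in which the $\phi_j$'s admit rational relations (so $U(k,s)$ may be larger than the generic fiber): here one decomposes the enlarged fiber into generic sub-fibers indexed by the relations and applies the above argument to each piece.
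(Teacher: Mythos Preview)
Your claim for $d\ge 3$ --- that the inner sum over front permutations vanishes --- is where the argument breaks. Write $\det(s')=\mathrm{sgn}(\sigma)\det(s'_0)$ when $s'$ differs from a base element $s'_0$ by a front permutation $\sigma\in S_{d-1}$. Since the Vandermonde is alternating, $V(\sigma\cdot x)=\mathrm{sgn}(\sigma)V(x)$, and hence
\[
\sum_{\sigma\in S_{d-1}}\det(s')\,V\bigl(\sigma\cdot(\lambda_i^2)_i\bigr)
=\det(s'_0)\sum_{\sigma}\mathrm{sgn}(\sigma)^2\,V(x)=(d-1)!\,\det(s'_0)\,V(x)\neq 0.
\]
So the signed inner sum does not cancel; each $k'\in L$ still contributes a genuine polynomial of degree $2(d-1)$ in $\nu$, and nothing in your outline forces the outer sum over $k'$ to be constant. (If instead you meant the \emph{unsigned} sum --- your displayed ``$\sum_\sigma V(\sigma\cdot(\lambda_i^2)_i)$'' carries no sign --- then you have silently dropped the factor $(-1)^{k'}\det(s')$ that is actually present in the grouped sum coming from \eqref{eql2}, and your own $d=2$ treatment, which explicitly invokes $(-1)^{k'}\det(s')$, is inconsistent with that reading.)

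The paper's route is different and does not rely on any inner cancellation. After fixing the tail (your set $L$, called $K$ there), it takes one representative $s_\kappa$ per $\kappa\in K$, computes $\det(s_\kappa)$ in terms of the position of $\kappa$ relative to the tail indices, and observes that $\sum_{\kappa\in K}(-1)^\kappa\det(s_\kappa)\,A(\Lambda_\kappa,\nu)$ is a polynomial of degree $2(d-1)$ in $\nu$ which takes the \emph{same} value $\det(s_0)\prod_{i<j,\ i,j\in K}(\lambda_i^2-\lambda_j^2)$ at each of the $2d$ points $\nu=\pm i\lambda_j$, $j\in K$; hence it is constant. That constant is nonzero, and its size in $m$ is exactly what is estimated in Lemma~\ref{lemma:asymp2} and feeds into the degree of $PE(m)$ in the main theorem --- so your claimed identical vanishing for $d\ge 3$ would in fact contradict the paper's conclusions.
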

\begin{proof}
Recall that $W$ acts by permutations and even number of sign changes. Note that if $w \in W$ acts by sign changes, then by (\ref{nedubravushka})
$$ A(\Lambda, \nu) = A(w \cdot \Lambda, \nu).$$
Hence it is sufficient to prove Proposition \ref{whiskeyinthejar} for $U(k,s) \cap \left( \{0,\ldots,n\} \times W' \right)$ instead of $U(k,s)$, where $W'$ is a subgroup of $W$ that acts by permutations.

Fix $k \in \{0,\ldots,n\}$, then $s(\Lambda(\sigma_{\tau(m),k})+\rho_M)$
spans $g_2 e_2 + \ldots + g_{n+1} e_{n+1}$ as $s \in W'$, where $(g_2, \ldots, g_{n+1})$ is an arbitrary permutation of the set 
$( \lambda_0, \lambda_1, \ldots, \widehat{\lambda}_k, \ldots, \lambda_n)$. Now fix some 
\begin{equation}\label{youngandbeautiful}
\lN{d+1}, \lN{d+2}, \ldots, \lN{n+1} \in \{ \lambda_0, \lambda_1, \ldots, \widehat{\lambda}_k, \ldots, \lambda_n \}
\end{equation}
with $\lN{i} \neq \lN{j}$ as $i \neq j$; without loss of generality assume that 
$\eta(d+1) < \eta(d+2) < \ldots < \eta(n+1)$. Then there exists $w \in W'$ and $k'\in [0, n]$ such that
\[B(-w(\Lambda(\sigma_{\tau(m),k'}) + \rho_M)) = e^{-i \left(\phi_{d+1} \lN{d+1} + \ldots + \phi_{n+1} \lN{n+1} \right)}.\]
By (\ref{nedubravushka}),
\begin{equation}\label{iwillloveyoutilltheend}
\begin{gathered}
U(w,k') = \left\lbrace s\in W', k\in[0,n]\, \left| \, s(\Lambda_{\sigma_{\tau(m),k}}+\rho_M)\right.\right. =\\ \left.\lambda_{\nu(2)} e_2 + \ldots + \lambda_{\nu(d)} e_d +  \lN{d+1} e_{d+1} + \ldots \lN{n+1} e_{n+1} \vphantom{s\in W', k\in[0,n]\,  s(\Lambda_{\sigma_{\tau(m),k}}+\rho_M) }  \right\rbrace,
\end{gathered}
\end{equation}
for some $\nu(i) \in \{ 0, 1, \ldots, \widehat{k}, \ldots, n\}$, $i \in [2,d]$ such that  $\nu(i) \neq \nu(i')$ for $i \neq i'$ and $\nu(i) \neq \eta(j)$ for any $j\in[d+1,n+1]$.
Note that if $(s,\kappa) \in U(w,k')$, then by (\ref{youngandbeautiful}) and (\ref{iwillloveyoutilltheend})
\begin{equation}\label{sticksandstones}
 \kappa \in K:= \{0, 1, \ldots, n \} \bs \cup_{i=d+1}^{n+1} \{\eta(i)\}.
\end{equation}
Let $(s_\kappa,\kappa)\in U(w,k)$. Put $\eta(d):=-1$ and $\eta(n+2):=n+2$. Let $\eta(d+B) < \kappa < \eta(d+B+1)$ for some $B=0, \ldots, n-d+1$.
\begin{definition}
For convenience set
$(\phi_2 | \phi_3 | \ldots | \phi_d) := \phi_2 e_2 + \ldots + \phi_d e_2$.
\end{definition}
By (\ref{iwillloveyoutilltheend}) 
$$s_\kappa (\Lambda_{\sigma_{\tau(m),\kappa}} + \rho_M) = \Lambda_\kappa +  \lN{d+1} e_{d+1} + \ldots \lN{n+1},$$
 where
\[ \Lambda_\kappa = (\lambda_0|\lambda_1| \ldots |\widehat{\lN{d+1}}|\ldots |\widehat{\lN{d+B}}| \ldots |\widehat{\kappa}|\ldots |\widehat{\lN{d+B+1}}| \ldots |\widehat{\lN{n+1}}|\ldots|\lambda_n),\]
and all its possible permutations. Note
\[A(\Lambda_\kappa+  \lN{d+1} e_{d+1} + \ldots \lN{n+1}, \nu) = \prod_{i \in K, i\neq k} (-\nu^2 - \lambda_i^2) \prod_{0 \le i < j  \le n, \, i,j \in K\bs \{k\} } (\lambda_i^2 - \lambda_j^2)\]
This is a polynomial of degree $2(d-1)$. We are interested in its values at the $2d$ points $\nu = \pm i \lambda_j$, $j\in K$:
\begin{equation}
\label{eqforA}
 \begin{gathered}
   A(\Lambda_\kappa, \pm i \lambda_j) = 0, \quad j \in K\bs \{\kappa\},\\
  A(\Lambda_\kappa, \pm i \lambda_\kappa) = (-1)^{k-B} \prod_{0 \le i < j \le n, \, i,j\in K} (\lambda_i^2 - \lambda_j^2).
 \end{gathered}
\end{equation}
It remains to compute the determinant of the permutation $s_\kappa$ that takes
\[ (\lambda_0|\lambda_1| \ldots |\widehat{\lN{d+1}}|\ldots |\widehat{\lN{d+B}}| \ldots |\widehat{\kappa}|\ldots |\widehat{\lN{d+B+1}}| \ldots |\widehat{\lN{n+1}}|\ldots|\lambda_n|\lN{d+1}|\ldots|\lN{n+1}   ),\]
to
\[ (\lambda_0|\lambda_1| \ldots | \widehat{\lambda_\kappa}| \ldots |\lambda_n ),\]
which equals
\begin{equation*}
 \begin{gathered}
  \det s_\kappa = (-1)^{d-\eta(d+1)} (-1)^{d-\eta(d+2)+1} \ldots (-1)^{d-\eta(d+B)+B-1} (-1)^{d-\eta(d+B+1)+B+1} \ldots (-1)^{d-\eta(n+1)+(n-d+1)}=\\
  (-1)^B (-1)^{d-\eta(d+1)+1} \cdot (-1)^{d-\eta(d+2)+2} \cdot \ldots \cdot (-1)^{d-\eta(n+1)+(n-d+1)} = (-1)^B \det s_0.
 \end{gathered}
\end{equation*}
Summing up over $\kappa$, we obtain
\begin{equation*}
 \begin{gathered}
  \sum_{\kappa \in K} \det(s_\kappa) (-1)^\kappa A(\Lambda_\kappa, \nu) = \sum_B \sum_{\kappa \in K \cap (\eta(d+B), \eta(d+B+1))} (-1)^\kappa (-1)^{B} \det(s_0) A(\Lambda_\kappa, \nu).
 \end{gathered}
\end{equation*}
This is a polynomial of order $2(d-1)$. Substituting $\nu = \pm i \lambda_{k'},  k' \in K$ and using (\ref{eqforA}), we get
\begin{equation*}
\begin{gathered}
 \sum_{\kappa \in K} \det(s_\kappa) (-1)^\kappa A(\Lambda_\kappa,  \pm i \lambda_{k'}) = (-1)^\kappa (-1)^B \det(s_0) (-1)^{k-B} \prod_{0 \le i < j \le n, \, i,j \in K} (\lambda_i^2 - \lambda_j^2) =\\
\det(s_0) \prod_{1 \le i < j \le n, \, i,j \in K} (\lambda^2_i - \lambda^2_j).
\end{gathered}
\end{equation*}
does not depend on $k'$, so the polynomial $\sum_{\kappa \in K} \det(s_\kappa) (-1)^\kappa A(\Lambda_\kappa, \nu)$ of order $2(d-1)$ has the same values at $2d$ points, hence it does not depend on $\nu$. All the possible permutations of $\Lambda_k$ are considered in the same way.
\end{proof}

\begin{proof}[Proof of Lemma \ref{lemma:asymp2}]
 By the proof of the previous lemma it suffices to consider
\begin{equation}\label{eq:A1}
 A(\Lambda_0, \pm i\lambda_0 )=\prod_{ 0 \le i < j \le n,\,  i,j\in K } (\lambda_i^2-\lambda_j^2).
\end{equation}
We need to estimate its order as $m \to \infty$. Recall that 
$$ \lambda_i = m + \tau_{i+1} + n - i,$$
then $\lambda_i^2 - \lambda_j^2$ has a linear growth in $m$. The set $K$ consists of $d$ elements, hence (\ref{eq:A1}) is the product 
of $\frac{d(d-1)}{2}$ factors of linear growth, hence
\begin{equation*}
 A(\Lambda_0, \pm i\lambda_0 )=O(m^{\frac{d(d-1)}{2}}), \quad m \to \infty.
\end{equation*}
\end{proof}

\begin{proof}[Proof of Lemma \ref{lemma:asymp3}]
Proceed with the same considerations as in Lemma \ref{lemma:asymp2}.
\end{proof}

\begin{proof}[Proof of Lemma \ref{lemma:asymp4}]
To prove the lemma we need to estimate 
$$ \frac{   \prod_{1\leqslant j \leqslant n, j \in K} (-\nu^2-\lambda_j^2) \prod_{1 \leqslant i < j \leqslant n, i,j \in K} (\lambda_i^2-\lambda_j^2)   }{m^{d(d-1)/2}}.$$
First note that 
$$ \frac{\prod_{1\leqslant i< j \leqslant n, i,j \in K} (\lambda_i^2-\lambda_j^2) }{ m^{\frac{(d-1)(d-2)}{2}  }   } =  \prod_{1\leqslant i< j \leqslant n, i,j \in K} \frac{(\lambda_i^2-\lambda_j^2)}{m} $$ 
is bounded for every $K$ and $m$. Second, 
$$ \frac{\prod_{1\leqslant j \leqslant n, j \in K} (-\nu^2-\lambda_j^2) }{ m^d  } =  \prod_{1\leqslant j \leqslant n, j \in K} \frac{(-\nu^2-\lambda_j^2)}{m}.$$ 
To estimate the latter note that
$$ \lambda_k^2 - \lambda_j^2 \leqslant \nu^2 - \lambda_j^2 \leqslant \lambda_k^2 - \lambda_j^2, \quad 1 \leqslant j \leqslant n, j \in K,$$
hence
$$\frac{|\nu^2-\lambda_j^2|}{m} \leqslant \frac{|\lambda_k^2 - \lambda_j^2| + |\lambda_k^2-\lambda_n^2|}{m}.$$
Note that this expression is bounded for every $j$ and $m$; this proves the lemma.
\end{proof}

\appendix 
\section{Deformation of a metric on the cone}\label{sect:AppendixA}
The goal of this appendix is to give an example to Remark \ref{unforgiven}.
 Let $g(u)$, $u \in [0,1]$ be a family of metrics  on the cone $M = \mathbb{S} \times [0,1]$, where $\mathbb{S}$ is a unit circle. As before, define
$$ T(u):f\mapsto f \cdot \sqrt{\rho}, \quad \sqrt{\rho}= \left( \frac{\det g(0)}{\det g(u)} \right)^{1/4}.$$
Consider the corresponding family of Laplace-Beltrami operators $\Delta(u)$; note that they are not essetially self-adjoint \cite{YCdeV}. The main 
goal of this section is to show that although $f \equiv 1$ obviously belongs to the domain of $\Delta(u)$, the function $[T(u)] f$ 
does not belong to the maximal extension of $\Delta(0)$, that is $H^2(M, g_0)$. To prove this, it is sufficient to show that $[T(u)]  f \not \in H^1(M, g_0)$ or, even weaker, that $||\nabla T(f)||_{L^2(M, g_0)} = \infty$.
Now let  $g(u) = dt^2 + l(u,t) d\phi^2$, $t \in [0,1]$ and $\phi \in \mathbb{S}$ for  arbitrary $l(u,t)$. Then 
$$\sqrt{\rho} = \left(   \frac{l(0,t)}{l(u,t)}    \right)^{1/4}$$
Let $f \equiv 1$, and suppose $l(u,t)$ does not depend on $\phi$, 
$$ |\nabla( \sqrt{\rho})|^2 = \left( \frac{\partial}{\partial t} \left( \frac{l(0,t)^{1/4}}{l(u,t)^{1/4}}\right) \right)^2, \quad ||\nabla Tf||_{L^2(M, g(0))} = \int_0^1 \left( \frac{\partial}{\partial t} \left( \frac{l(0,t)^{1/4}}{l(u,t)^{1/4}}\right) \right)^2 \cdot \big( l(0,t)\big)^{1/2} dt.$$
Let $l(u,t) = u \cdot t^{1/2} + t,$ then  
$$ ||\nabla Tf||_{L^2(M, g(0))} = \int_0^1 \frac{u^2}{64 t^{5/4} (\sqrt{t}+u)^{5/2}} dt = \infty.$$

\bibliography{foo}{}
\bibliographystyle{alpha}
\end{document}